\documentclass[12pt]{article}

\usepackage{amsmath,amssymb,amsthm,mathtools}
\usepackage[a4paper,margin=30mm]{geometry}
\usepackage[hidelinks]{hyperref}

\numberwithin{equation}{section}

\newtheorem{theorem}{Theorem}[section]
\newtheorem{proposition}[theorem]{Proposition}
\newtheorem{lemma}[theorem]{Lemma}
\newtheorem{corollary}[theorem]{Corollary}
\theoremstyle{definition}
\newtheorem{definition}[theorem]{Definition}
\theoremstyle{remark}
\newtheorem{remark}[theorem]{Remark}

\DeclareMathOperator{\Aut}{Aut}
\DeclareMathOperator{\Cay}{Cay}
\DeclareMathOperator{\End}{End}

\DeclareMathOperator{\GammaO}{\Gamma O}
\DeclareMathOperator{\Orth}{O}
\DeclareMathOperator{\PG}{PG}
\DeclareMathOperator{\rad}{rad}
\DeclareMathOperator{\Sym}{Sym}
\DeclareMathOperator{\Tr}{Tr}

\newcommand{\F}{\mathbb F}
\newcommand{\calO}{\mathcal O}
\newcommand{\calQ}{\mathcal Q}
\newcommand{\eps}{\varepsilon}
\newcommand{\restr}[2]{\left.#1\right|_{#2}}

\title{First Subconstituents of Orthogonal Graphs in Even Characteristic:\\
Automorphisms, Cliques, Cores, and Binary Triple Transitivity}

\author{Kai Zhou$^{1}$ and Hongfeng Wu$^{2}$\footnote{Corresponding author.}
\setcounter{footnote}{-1}
\footnote{E-Mail addresses:
kzhou@cslg.edu.cn (K. Zhou), whfmath@gmail.com (H. Wu)}
\\
{1.~School of Mathematics and Statistics, Suzhou University of Technology,}\\
{Changshu 215500, P. R. China}
\\
{2.~College of Science, North China University of Technology, Beijing, China}}

\date{}

\begin{document}

\maketitle

\begin{abstract}
In even characteristic, the first subconstituent of an orthogonal graph at a singular point is shown to be the complement of an affine polar graph. 
This identification yields the full automorphism group as an affine semisimilarity group and provides explicit extensions to the ambient graph.
Maximum independent sets are affine cosets of maximal totally singular subspaces, giving the independence and chromatic numbers.
In plus type, Delsarte cliques correspond bijectively to ovoids through the base point, leading to a sharp rank threshold.
Over $\mathbb{F}_2$, we determine the exact maximum clique number in every dimension, prove that all maximum cliques form a single orbit, and obtain the core of every first subconstituent.
A uniform Witt‑extension argument gives a short proof of triple transitivity for the binary local graphs.

\medskip
\noindent\textbf{Keywords.}
Orthogonal graph; first subconstituent; affine polar graph; maximum clique;
ovoid; graph core; Terwilliger algebra; semisimilarity.\\
\textbf{Mathematics Subject Classification}: 05E30,\ 05C25,\ 51E20,\ 51A50
\end{abstract}

\section{Introduction}

Orthogonal graphs in characteristic two are naturally defined from a
quadratic form rather than from a symmetric bilinear form.  This distinction
often makes a direct coordinate analysis long: singularity is quadratic,
whereas adjacency is governed by the associated alternating polar form.
The two structures separate particularly cleanly in a neighbourhood of a
singular point.

Let $p$ be a singular vector and choose a singular vector $f$ with
$B(p,f)=1$.  The hyperbolic plane $\langle p,f\rangle$ splits off, and its
orthogonal complement $W$ is again a nondegenerate quadratic space.  Every
neighbour of $[p]$ has a unique normalized representative
\[
  v_x=f+x+Q_W(x)p \qquad (x\in W).
\]
The elementary identity
\[
  B(v_x,v_y)=Q_W(x-y)
\]
then identifies the entire first subconstituent with the complement of the
affine polar graph on $W$.  Thus a projective local problem becomes an
affine Cayley-graph problem in one step.

Affine polar graphs go back to the geometry developed by Cohen and Shult
\cite{CohenShult}.  Their exact automorphism groups are also the affine-polar
case of the exact $2$-closure theorem for rank-three groups
\cite{Skresanov}; see also the recent consolidated treatment
\cite{GuoVasilevWang}.  Combining that theorem with the affine chart yields
the automorphism group without reconstructing field operations from many
families of specially chosen vertices.

The same reduction opens two further directions.  First, cliques in the
local graph are anisotropic-difference sets, while independent sets are
affine totally singular flats.  Delsarte cliques in plus type are therefore
the affine shadows of ovoids.  This connects the local graph to the sharp
separation results and ovoid obstructions in
\cite{BambergEtAlSeparating,BambergEtAlTactical}.  Second, the theorem of
Roberson that every primitive strongly regular graph is a pseudocore
\cite{Roberson} converts the clique and coloring information into an exact
endomorphism theorem.

There is also a recent development concerning Terwilliger algebras.
Herman, Maleki, and Razafimahatratra isolated the binary affine polar graphs
as one of the last open families in their classification of triply
transitive strongly regular graphs \cite{HermanEtAl}.  Li and Zou have since
proved the conjecture \cite{LiZou}.  The affine chart below gives a short
uniform proof based only on the recovery of a two-vector Gram matrix and
Witt extension; it simultaneously transfers the conclusion to the local
graphs, which are the complementary relation.

The odd-dimensional orthogonal graph in characteristic two is isomorphic to
the corresponding symplectic graph \cite{WanZhou}; its automorphisms and
subconstituents have been considered in \cite{TangWan,LiWang,GuWan}.  We
therefore concentrate on the two nondegenerate even-dimensional orthogonal
types.  In the notation
$O(2\nu+\delta,q)$ used in \cite{WanZhou}, these are $\delta=0$ and
$\delta=2$.

Our principal result may be stated as follows.  The notation
$\GammaO_{2m}^{\eps}(q)$ denotes the full semisimilarity group of the quadratic
space; it is defined precisely in Section~\ref{sec:preliminaries}.

\begin{theorem}\label{thm:main-intro}
Let $q=2^h$, let $(V,Q)$ be a nondegenerate quadratic space of dimension
$2m+2$ and type $\eps\in\{+1,-1\}$, and let $[p]$ be a singular point.
Write $\Gamma_1(p)$ for the graph induced by $\calO(V,Q)$ on the neighbours
of $[p]$.
\begin{enumerate}
\item If $m\ge2$, then
\[
  \Aut\Gamma_1(p)\cong W\rtimes\GammaO_{2m}^{\eps}(q),
\]
where $W$ is the additive group of a $2m$-dimensional quadratic space of
type $\eps$.
\item If $m=1$ and $\eps=+1$, then
\[
  \Aut\Gamma_1(p)\cong \Sym(q)\wr\Sym(2)
\]
in its product action.
\item If $m=1$ and $\eps=-1$, then $\Gamma_1(p)\cong K_{q^2}$ and
$\Aut\Gamma_1(p)\cong\Sym(q^2)$.
\end{enumerate}
In every case, every automorphism of $\Gamma_1(p)$ is the restriction of an
automorphism of the ambient orthogonal graph that fixes $[p]$.
\end{theorem}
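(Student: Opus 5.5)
The plan is to reduce everything to the affine chart from the introduction. With $p,f$ singular, $B(p,f)=1$ and $W=\langle p,f\rangle^\perp$, each neighbour of $[p]$ has a unique representative $v_x=f+x+Q_W(x)p$. The identity $B(v_x,v_y)=Q_W(x-y)$ shows that $x\mapsto[v_x]$ is an isomorphism from the complement of the affine polar graph $\Cay(W,\{w\ne0:Q_W(w)=0\})$ onto $\Gamma_1(p)$. Adjacency in $\calO(V,Q)$ means $B\neq0$, so $v_x\sim v_y$ exactly when $Q_W(x-y)\ne0$. Since a graph and its complement have the same automorphism group, it suffices to compute $\Aut$ of the affine polar graph $VO^{\eps}_{2m}(q)$ and then lift each automorphism.

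For $m\ge2$ the affine polar graph is a rank-three graph for the group $W\rtimes\GammaO^{\eps}_{2m}(q)$. Translations and semisimilarities preserve the set of singular differences, and the group acts transitively on nonzero singular and on nonsingular vectors by Witt's theorem together with similarities scaling $Q$. I would then invoke the exact $2$-closure theorem for affine rank-three groups \cite{Skresanov} in its affine-polar case. It states that the full automorphism group equals this group, provided the graph is not one of the small exceptional cases. I need to check the small cases $m=2$, $\eps=\pm$ and small $q$, and especially $q=2$, against the exception list of that theorem. This is the step I expect to be the main obstacle. If something like $VO^+_4(2)$ or $VO^-_4(2)$ has a larger group, I will need to see whether it is excluded by the hypotheses. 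Otherwise I will argue directly, by recovering the maximal singular flats (the maximum cliques of the affine polar graph) and applying the fundamental theorem for the polar space at infinity.

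For $m=1$ the cases are explicit. In plus type, $Q_W(x_1,x_2)=x_1x_2$ on $\F_q^2$. Singular nonzero differences are the vectors with exactly one coordinate zero, so the affine polar graph is the $q\times q$ rook's graph $K_q\square K_q$. Its automorphism group is $\Sym(q)\wr\Sym(2)$ in product action, and the complement has the same group. In minus type, $Q_W$ is anisotropic, so every difference is nonsingular, $\Gamma_1(p)\cong K_{q^2}$, and the automorphism group is $\Sym(q^2)$.

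For the lifting statement I would write down extensions explicitly. A translation by $a\in W$ lifts to the Siegel (Eichler) transformation $p\mapsto p$, $f\mapsto f+a+Q(a)p$, $w\mapsto w+B(w,a)p$. This is an isometry fixing $p$ and sending $v_x$ to $v_{x+a}$; I will verify the latter with the quadratic identity $Q(x+a)=Q(x)+Q(a)+B(x,a)$. A semisimilarity $g$ of $W$ with multiplier $\lambda$ and field automorphism $\sigma$ lifts to $p\mapsto\lambda p$, $f\mapsto f$, $w\mapsto g(w)$, extended $\sigma$-semilinearly. This is a semisimilarity of $V$ whose induced map sends $[v_x]\mapsto[v_{gx}]$. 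For $m\ge2$ these generate $\Aut\Gamma_1(p)$ by the first step.

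For $m=1$ the lifts must be arbitrary. In plus type, $V$ has Witt index $2$ with $p^\perp/\langle p\rangle$ a hyperbolic line. Vertices of $\Gamma_1(p)$ are the points not in $p^\perp$. The two parallel classes of the rook graph correspond to the two singular lines through $[p]$, which are the two points of the line at infinity. The ambient graph $\calO(V,Q)$ on $q^3+\dots$ points is large, but I would show that an arbitrary permutation of the coordinates in each class extends. One way is to observe that any vertex of $\calO$ other than $[p]$ is determined by its adjacency pattern to $\Gamma_1(p)$, and to define the extension on $p^\perp$ points by transporting these patterns. The swap of the two classes comes from an isometry interchanging the two singular lines. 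In minus type, every point of $V$ outside $p^\perp$ is a neighbour of $[p]$. The points of $p^\perp$ other than $p$ are not singular except $[p]$ itself, since $p^\perp/p$ is anisotropic. Hence the ambient graph is a cone over $K_{q^2}$, and any permutation of the neighbours extends by fixing $[p]$.
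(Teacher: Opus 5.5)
Your proposal is correct and follows the paper's proof essentially step for step: the affine chart and passage to the complement, the exact $2$-closure theorem for $m\ge2$, the rook graph and $K_{q^2}$ for $m=1$, and the same Eichler transformations and $\sigma$-semilinear extension $ap+x+cf\mapsto\lambda\sigma(a)p+Ax+\sigma(c)f$ for the lifting. The one variation is the case $m=1$, $\eps=+1$, where the paper simply observes $\calO(V,Q)\cong\overline{H(2,q+1)}$ (which has only $(q+1)^2$ vertices, not $q^3+\cdots$), so $\Sym(q+1)\wr\Sym(2)$ directly supplies the extensions that your pattern-transport argument builds by hand; and the exception check you single out for $m=2$, $q=2$ passes, since the local graphs are $K_4\square K_4$ and the complement of the Clebsch graph, with automorphism groups of orders $1152=16\cdot|\Orth_4^+(2)|$ and $1920=16\cdot|\Orth_4^-(2)|$ (which is just as well, because your fallback via the fundamental theorem at infinity would break down exactly at $m=2$, where $Q^+(3,q)$ is a grid and $Q^-(3,q)$ contains no lines).
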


For $m\ge2$ the group order is
\begin{equation}\label{eq:intro-order}
 |\Aut\Gamma_1(p)|
 =2h(q-1)q^{m(m+1)}(q^m-\eps)
   \prod_{i=1}^{m-1}(q^{2i}-1).
\end{equation}
The exceptional hyperbolic and elliptic orders are $2(q!)^2$ and
$(q^2)!$, respectively.

The extremal, core, and binary consequences can be summarized as follows.
Here $\alpha$, $\omega$, and $\chi$ denote the independence, clique, and
chromatic numbers, respectively, and
\begin{equation}\label{eq:eta-intro}
  \eta_m=(-1)^{m(m+1)/2}.
\end{equation}

\begin{theorem}\label{thm:further-intro}
Let $G=\Gamma_1(p)$.
\begin{enumerate}
\item If $\eps=+1$, then $\alpha(G)=\chi(G)=q^m$.  If $\eps=-1$, then
\[
  \alpha(G)=q^{m-1},\qquad \chi(G)=q^{m+1}.
\]
All maximum independent sets are affine cosets of maximal totally singular
subspaces.
\item If $q=2$, then
\[
 \omega(G)=
 \begin{cases}
  2m+2,&\eps=\eta_m\text{ and }m\text{ is odd},\\
  2m+1,&\eps=\eta_m\text{ and }m\text{ is even},\\
  2m,&\eps=-\eta_m.
 \end{cases}
\]
Moreover, $\Aut(G)$ is transitive on the maximum cliques.
\item The core of $G$ is
\[
 \operatorname{core}(G)\cong
 \begin{cases}
  K_{q^m},&\eps=+1\text{ and }1\le m\le3,\\
  G,&\eps=+1\text{ and }m\ge4,\\
  G,&\eps=-1.
 \end{cases}
\]
Thus, in the last two cases every endomorphism is an automorphism.
\item If $q=2$ and $m\ge2$, then $G$ is triply transitive in the sense that
its basic Terwilliger space, Terwilliger algebra, and vertex-stabilizer
centralizer algebra coincide.
\end{enumerate}
\end{theorem}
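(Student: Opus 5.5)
The plan is to work entirely in the affine chart of the introduction. Each neighbour of $[p]$ is $v_x=f+x+Q_W(x)p$ with $x\in W$, and $B(v_x,v_y)=Q_W(x-y)$. Assuming adjacency in $\calO(V,Q)$ means nonorthogonality, $G$ is the Cayley graph $\Cay(W,S)$ with $S=\{w:Q_W(w)\neq0\}$. Cliques are therefore sets with all pairwise differences anisotropic, and independent sets are sets with all pairwise differences singular. Each part of Theorem~\ref{thm:further-intro} then reduces to a statement about the $2m$-dimensional quadratic space $W$ of type $\eps$.

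For part~1, I would start from an independent set $I$ containing $0$. Every element and every difference in $I$ is singular. In characteristic two this gives $B(x,y)=Q(x+y)+Q(x)+Q(y)=0$, so $\langle I\rangle$ is totally singular. Hence $|I|\le q^{\nu}$, where the Witt index is $\nu=m$ for $\eps=+1$ and $\nu=m-1$ for $\eps=-1$. Equality forces $I$ to be a coset of a maximal totally singular subspace. For the chromatic number:
\begin{itemize}
\item When $\eps=+1$, the cosets of a maximal totally singular subspace $U$ give $q^m$ colour classes. The lower bound is $\chi\ge|W|/\alpha$.
\item When $\eps=-1$, the lower bound $q^{2m}/q^{m-1}=q^{m+1}$ is attained by the cosets of $U$ of dimension $m-1$.
\end{itemize}

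For part~2, with $q=2$, a clique containing $0$ is a set $C$ with $Q(c)=1$ for all $c\in C$ and $B(c,c')=1$ for all distinct $c,c'$. The plan is to study the Gram matrix $J-I$ over $\F_2$ together with the prescribed quadratic values. If $|C|=k$, then $J-I$ has rank $k$ or $k-1$ according as $k$ is even or odd. The space spanned by $C$ carries the induced quadratic form. I would compute its Arf invariant, which for an orthonormal-type configuration depends on $k \bmod 8$; this is where $\eta_m=(-1)^{m(m+1)/2}$ comes from. Then I would embed $C$ in $W$ by Witt's theorem.
\begin{itemize}
\item The bound $k\le 2m+2$ holds because the span has dimension at least $k-2$ (using a radical argument).
\item The exact value follows by matching the type of $\langle C\rangle$ and its complement against $\eps$.
\item Transitivity on maximum cliques follows because the Gram data determine $C$ up to isometry, so Witt extension gives transitivity via Theorem~\ref{thm:main-intro}.
\end{itemize}
I expect the main obstacle to be the precise bookkeeping of Arf invariants and the radical in the odd/even cases.

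For part~3, I would use Roberson's theorem: $G$ is primitive strongly regular, so it is a pseudocore. Hence either $\omega=\chi$ and the core is $K_\chi$, or $G$ is a core.
\begin{itemize}
\item For $\eps=-1$, the clique bound $\omega\le|W|/\alpha$ from Delsarte or clique--coclique considerations gives $\omega< q^{m+1}=\chi$, so $G$ is a core.
\item For $\eps=+1$, $\omega=q^m$ exactly when a Delsarte clique exists. By the ovoid correspondence, such a clique exists if and only if $Q^+(2m+1,q)$ has an ovoid, which holds for $m\le3$ and fails for $m\ge4$ by the known nonexistence results cited.
\end{itemize}

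For part~4, I would show that the orbits of $\Aut(G)$ on triples $(x,y,z)$ are determined by the Terwilliger relation data. After translating to $x=0$, the pair $(y,z)$ is determined up to $\GammaO$ by $Q(y)$, $Q(z)$, $Q(y+z)$, and whether $y,z$ are dependent; this is exactly the Gram data. Witt extension, which needs $m\ge2$ for enough room, then gives a single orbit for each admissible value pattern. It follows that the stabilizer centralizer algebra has dimension equal to the number of nonempty Terwilliger patterns, which forces the three algebras to coincide.
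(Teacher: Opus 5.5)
Your plan follows the paper's route in all four parts, but the minus-type core step does not work as written. The bound you invoke, $\omega\le|W|/\alpha$, equals $q^{2m}/q^{m-1}=q^{m+1}=\chi(G)$. So it gives only $\omega\le\chi$, which is exactly the borderline case that Roberson's criterion has to exclude. Strictness needs the Delsarte--Hoffman bound with the actual least eigenvalue. From Proposition~\ref{prop:local-parameters} with $\eps=-1$ one gets $\lambda-\mu=(q-2)q^{m-1}$ and $\mu-k=-(q-1)q^{2m-2}$. Hence $s=-q^{m-1}$ and $1-k/s=q^{m+1}-q^m+q<q^{m+1}$, which is Lemma~\ref{lem:clique-bounds}. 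The reason $|W|/\alpha$ is weaker is that in minus type $\alpha=q^{m-1}$ lies strictly below the Hoffman bound $q^{2m}/(1-k/s)$. In addition, Roberson's theorem requires a primitive strongly regular graph, so $m=1$ must be handled separately:
\begin{itemize}
\item for $\eps=-1$, $G=K_{q^2}$ has $\omega=\chi$, so your strict inequality is false there, but $G$ is trivially a core;
\item for $\eps=+1$, the conclusion $K_q$ follows from $\omega=\chi=q$ alone, with no pseudocore input (for $q=2$, $G=2K_2$ is not primitive).
\end{itemize}

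Part 2 also needs its bookkeeping fixed.
\begin{itemize}
\item With $k$ nonzero clique vectors, the kernel of $e_i\mapsto x_i$ lies in $\rad(b_k)$, which has dimension at most $1$. So the span has dimension at least $k-1$, giving $k\le 2m+1$ and $\omega\le 2m+2$; your ``at least $k-2$'' loses one.
\item ``The Gram data determine $C$ up to isometry'' is not automatic when $\rad(b_k)=\langle\mathbf{1}\rangle$ is singular, i.e.\ $k=2m-1$, $m$ even, $\eps=-\eta_m$. There you must show $\sum x_i=0$, by a type argument. Otherwise $v=\sum x_i$ is a nonzero singular vector and the span is $v^\perp$. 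Then $v^\perp/\langle v\rangle$, which has type $\eps$, would be isometric to the descended form, which has type $\eta_m$.
\item For $m$ odd and $\eps=-\eta_m$, the span is degenerate with a nonsingular radical, so ``the type of $\langle C\rangle$'' is undefined. Such a space is isometric to $v^\perp$ for any nonsingular $v\in W$, so it embeds in $W$ of either type. This gives the size-$2m$ clique directly.
\end{itemize}

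Finally, in part 4 Witt's theorem needs no extra room. The orbit argument only uses $\Orth(W,Q_W)\le\Aut(X)_0$, so the hypothesis $m\ge2$ plays no role there.
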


Besides shortening the proof, this formulation makes the rank conditions
transparent.  In particular, the elliptic case corresponding to
$(\nu,\delta)=(2,2)$ has $m=2$ and is covered by the uniform affine-polar
theorem.  The only genuinely exceptional case in the range $\nu\ge2$ is
the hyperbolic case $(\nu,\delta)=(2,0)$.

\section{Quadratic and affine polar graphs}\label{sec:preliminaries}

Throughout, $q=2^h$ and $\F_q$ is the field with $q$ elements.  Let $V$ be
a finite-dimensional $\F_q$-vector space and let $Q:V\to\F_q$ be a
quadratic form.  Its polar form is
\begin{equation}\label{eq:polar}
  B(u,v)=Q(u+v)-Q(u)-Q(v).
\end{equation}
Because the characteristic is two, $B$ is alternating.  We call $Q$
nondegenerate when $B$ is nondegenerate.  Consequently a nondegenerate
quadratic space has even dimension.

Up to isometry there are two nondegenerate quadratic forms in dimension
$2r$.  We label their types by $\eps=+1$ (hyperbolic, or plus type) and
$\eps=-1$ (elliptic, or minus type).  Convenient representatives are
\begin{align}
 Q_{2r}^{+}(x_1,\ldots,x_r,y_1,\ldots,y_r)
   &=\sum_{i=1}^{r}x_i y_i,\label{eq:plus-form}\\
 Q_{2r}^{-}(x_1,\ldots,x_{r-1},y_1,\ldots,y_{r-1},u,v)
   &=\sum_{i=1}^{r-1}x_i y_i+\alpha u^2+uv+\alpha v^2,
   \label{eq:minus-form}
\end{align}
where $\Tr_{\F_q/\F_2}(\alpha)=1$.  The trace condition is equivalent to
$\alpha\notin\{z^2+z:z\in\F_q\}$ and makes the final binary form
anisotropic.

\begin{definition}
Let $(V,Q)$ be nondegenerate.  The graph $\calO(V,Q)$ has as its vertices
the singular one-dimensional subspaces
\[
  \{[v]\in\PG(V):Q(v)=0\}.
\]
Two distinct vertices $[u]$ and $[v]$ are adjacent when $B(u,v)\ne0$.
This condition is independent of the chosen nonzero representatives.
For a vertex $[p]$, its \emph{first subconstituent} $\Gamma_1(p)$ is the
subgraph induced on the neighbours of $[p]$.
\end{definition}

This adjacency convention is the complement, on the singular points, of
the usual polar-space collinearity relation.  It agrees with the convention
for the orthogonal graphs $O(2\nu+\delta,q)$ in \cite{WanZhou}.

\begin{definition}
Let $(W,Q_W)$ be a nondegenerate quadratic space.  Its affine polar graph is
\[
 VO(W,Q_W)=\Cay\bigl(W,S_0\bigr),\qquad
 S_0=\{z\in W\setminus\{0\}:Q_W(z)=0\}.
\]
Thus distinct $x,y\in W$ are adjacent precisely when $Q_W(x-y)=0$.
We write $VO_{2m}^{\eps}(q)$ when $\dim W=2m$ and $W$ has type $\eps$.
\end{definition}

Let $\Aut(\F_q)=\langle z\mapsto z^2\rangle$, a cyclic group of order
$h$.  The semisimilarity group of $(W,Q_W)$ is $\GammaO(W,Q_W)$
\begin{equation}\label{eq:GammaO-definition}
\begin{split}
 =\{A:W\to W:\;&A\text{ is an invertible }
 \sigma\text{-semilinear map for some }\sigma\in\Aut(\F_q),\\
 &Q_W(Ax)=\lambda\,\sigma(Q_W(x))
 \text{ for some }\lambda\in\F_q^*,\ \forall x\in W\}.
\end{split}
\end{equation}
The scalar $\lambda$ is the multiplier. We write $\GammaO_{2m}^{\eps}(q)$ when $\dim W=2m$ and $W$ has type $\eps$. The corresponding affine group is
\[
 A\GammaO(W,Q_W)=W\rtimes\GammaO(W,Q_W),
\]
the semidirect product of the additive group of $W$ and the semisimilarity group. The product is given by
\[
(b_1,A_1)(b_2,A_2)=(b_1+A_1b_2,\;A_1A_2),
\]
and the action on $W$ is
\[
x\longmapsto Ax+b\qquad (A\in\Gamma O(W,Q_W),\;b\in W).
\]

The modern rigidity input used below is the following exact description of the automorphism group of affine polar graphs.

\begin{theorem}\label{thm:affine-polar-aut}
Let $(W,Q_W)$ be nondegenerate of dimension $2m\ge4$.  Then
\[
 \Aut\bigl(VO(W,Q_W)\bigr)=A\GammaO(W,Q_W).
\]
The same equality holds for the complement of $VO(W,Q_W)$.
\end{theorem}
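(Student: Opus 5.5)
The containment $A\GammaO(W,Q_W)\le\Aut(VO(W,Q_W))$ is direct, so the real content is the reverse inclusion, which I will take from the exact $2$-closure theorem for rank-three groups \cite{Skresanov} (see also \cite{GuoVasilevWang}). The steps run in the following order.

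First I check that each element acts as an automorphism. A translation $x\mapsto x+b$ preserves differences, so it preserves adjacency. For $A\in\GammaO(W,Q_W)$ with companion automorphism $\sigma$ and multiplier $\lambda$, we have
\[
Q_W(Ax-Ay)=Q_W\bigl(A(x-y)\bigr)=\lambda\,\sigma\bigl(Q_W(x-y)\bigr).
\]
This vanishes exactly when $Q_W(x-y)=0$, so $A$ preserves both adjacency and non-adjacency. Hence $A\GammaO(W,Q_W)$ lies in $\Aut(VO(W,Q_W))$. Since a graph and its complement have the same automorphism group, the final sentence of the theorem follows immediately once the first equality is proved.

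For the reverse inclusion, I note that $G_0:=W\rtimes \Orth(W,Q_W)$, or $W\rtimes\GammaO(W,Q_W)$, is a rank-three permutation group on $W$ when $\dim W=2m\ge4$.
\begin{itemize}
\item The translations are transitive on $W$.
\item By Witt's theorem, the stabiliser of $0$ has exactly two orbits on $W\setminus\{0\}$: the nonzero singular vectors and the nonsingular vectors. The scalar similarities $x\mapsto cx$, which have multiplier $c^2$, make all nonsingular values a single orbit. In fact, already in $\Orth(W,Q_W)$ the nonsingular vectors with a fixed value of $Q_W$ form one orbit, and the similarities then fuse the different values.
\item Both orbits are nonempty because $m\ge2$.
\end{itemize}
Consequently the orbital graphs are $VO(W,Q_W)$ and its complement. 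The automorphism group of a rank-three graph is precisely the $2$-closure of the rank-three group. The cited theorem identifies this $2$-closure in the affine polar case as $A\GammaO(W,Q_W)$, with the hypothesis $2m\ge4$ excluding the small exceptions, so equality follows.

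The main obstacle is matching hypotheses and conventions. I must confirm that the affine-polar entry in the cited classification covers $q$ even, both types $\eps=\pm1$, and all $m\ge2$. I must also confirm that its group "$A\Gamma O$" means the full semisimilarity group defined in \eqref{eq:GammaO-definition} and not merely isometries extended by field automorphisms. If a borderline case such as $m=2$, $\eps=+1$ is listed separately, I would handle it directly. There $VO_4^+(q)$ is the $q^2\times q^2$ bilinear-forms graph of $2\times2$ matrices, with singular vectors corresponding to the rank-one matrices, and its automorphism group is classically known. I would check that this group has the same order as $W\rtimes\GammaO_4^+(q)$, including the transpose, which acts as a similarity interchanging the two rulings.
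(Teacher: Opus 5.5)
Your proposal follows the same route as the paper: you take the reverse inclusion from Skresanov's exact $2$-closure theorem (affine-polar case) and the complement statement from the invariance of the automorphism group under complementation, and you also spell out the easy inclusion and the rank-three verification, which the paper leaves implicit. One small correction to that verification: for $q>2$ the group $W\rtimes\Orth(W,Q_W)$ has rank $q+1$, not three, because its point stabiliser separates the nonsingular vectors by the value of $Q_W$; the rank-three group is therefore $W\rtimes\GammaO(W,Q_W)$, or at least translations extended by the similarity group, as your own bullet point already indicates.
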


\begin{proof}
The affine polar graphs occur among the affine rank-three graphs.  Their
exact automorphism groups, equivalently the exact $2$-closures of the
associated rank-three groups, are determined in \cite{Skresanov}; the
affine-polar case is restated in \cite{GuoVasilevWang}.  See also the
rank-three tables in \cite{BrouwerVanMaldeghem}.  Passing to a graph
complement does not change its automorphism group.
\end{proof}

For geometric intuition, every coset of a totally singular subspace is a
clique in the affine polar graph.  Conversely, after translating a clique
through the origin, all pairwise differences are singular; polarization
then shows that its linear span is totally singular.  The theorem says, in
particular, that the global incidence among these affine singular flats
recovers both the affine structure and the quadratic form up to field
automorphism and multiplier.  This is the rigidity that a direct coordinate
proof must reconstruct one vertex family at a time.

\section{The affine chart at a singular point}\label{sec:chart}

We now give the key reduction.  It uses only Witt decomposition and one
polarization identity.

\begin{proposition}\label{prop:chart}
Let $(V,Q)$ be a nondegenerate quadratic space of dimension $2m+2$ and type
$\eps$, and let $p\in V\setminus\{0\}$ be singular.  Choose a singular
vector $f$ with $B(p,f)=1$ and put
\[
 H=\langle p,f\rangle,\qquad W=H^\perp.
\]
Then $(W,Q_W)$ is nondegenerate of dimension $2m$ and type $\eps$.  The map
\begin{equation}\label{eq:affine-chart}
 \iota:W\longrightarrow V(\Gamma_1(p)),\qquad
 x\longmapsto[v_x],\quad v_x=f+x+Q_W(x)p,
\end{equation}
is a bijection.  Moreover,
\begin{equation}\label{eq:key-identity}
 B(v_x,v_y)=Q_W(x-y)\qquad(x,y\in W).
\end{equation}
Consequently
\begin{equation}\label{eq:local-cayley}
 \Gamma_1(p)\cong
 \Cay\bigl(W,\{z\in W:Q_W(z)\ne0\}\bigr)
 =\overline{VO(W,Q_W)}.
\end{equation}
\end{proposition}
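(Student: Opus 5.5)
The plan is to verify the three claims directly from the Witt decomposition $V=H\perp W$, using only bilinearity of $B$ and the polarization identity \eqref{eq:polar}.

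\emph{Step 1: $W$ is nondegenerate of type $\eps$.} Since $B(p,f)=1$, the restriction of $B$ to $H$ has Gram matrix $\begin{pmatrix}0&1\\1&0\end{pmatrix}$. Hence $H$ is nondegenerate and $V=H\oplus H^\perp$, so $W$ is nondegenerate of dimension $2m$. The form $Q|_H(ap+bf)=ab$ is hyperbolic, so $V\cong H\perp W$ with $H$ hyperbolic. Witt cancellation, or equivalently additivity of the Witt index and the uniqueness of the two types in each even dimension, then shows that $W$ has the same type $\eps$ as $V$.

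\emph{Step 2: $\iota$ is a bijection.} A neighbour of $[p]$ is a singular $[v]$ with $B(p,v)\neq0$. Write $v=ap+bf+x$ with $x\in W$. Then $B(p,v)=b$, so $b\ne0$, and after rescaling we may take $b=1$; this normalized representative is unique within $[v]$. Singularity gives
\[
0=Q(ap+f+x)=Q(ap+f)+Q(x)=a+Q_W(x),
\]
using $Q(p)=Q(f)=0$, $B(p,f)=1$, and $H\perp W$. Thus $a=Q_W(x)$, since the characteristic is two. Conversely, each $v_x$ is singular by the same computation, satisfies $B(p,v_x)=1\neq0$, and is not a multiple of $p$. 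Distinct $x$ give distinct points, because the normalized representative is unique and its $W$-component is $x$. So $\iota$ is a bijection.

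\emph{Step 3: the key identity.} Expand
\[
B(v_x,v_y)=B\bigl(f+x+Q_W(x)p,\;f+y+Q_W(y)p\bigr).
\]
The terms $B(f,f)$ and $B(p,p)$ vanish because $B$ is alternating. The cross terms between $H$ and $W$ vanish by orthogonality. The mixed $H$-terms give $Q_W(y)B(f,p)+Q_W(x)B(p,f)=Q_W(x)+Q_W(y)$. What remains is
\[
B(v_x,v_y)=Q_W(x)+Q_W(y)+B(x,y)=Q_W(x+y)=Q_W(x-y),
\]
by \eqref{eq:polar} in characteristic two. Therefore $\iota(x)$ and $\iota(y)$ are adjacent if and only if $Q_W(x-y)\ne0$. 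This is exactly adjacency in the Cayley graph on $W$ with connection set $\{z\in W:Q_W(z)\ne0\}$, which is the complement of $VO(W,Q_W)$. This gives \eqref{eq:local-cayley}.

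\emph{Main obstacle.} The only step that is not pure bookkeeping is the type statement in Step 1. I would handle it by citing Witt's cancellation theorem for quadratic forms in characteristic two. This applies because $H$ and $W$ are nondegenerate, so the form is non-defective. Alternatively, one can compare explicit representatives: adding a hyperbolic plane to $Q^{\pm}_{2m}$ gives $Q^{\pm}_{2m+2}$, and the two resulting types are distinct. This distinctness can be detected, for example, by Witt index or by counting singular vectors.
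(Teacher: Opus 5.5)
Your proposal is correct and follows essentially the same route as the paper. Both arguments use the splitting $V=H\perp W$, normalize the representative so that $B(p,u)=1$, which forces the $f$-coefficient to be $1$ and the $p$-coefficient to be $Q_W(x)$, and then expand $B(v_x,v_y)$ directly; your appeal to Witt cancellation or the Witt index just makes explicit the paper's one-line claim that removing a hyperbolic plane preserves the type.
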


\begin{proof}
The plane $H$ is hyperbolic, so $V=H\perp W$.  Removing a hyperbolic plane
preserves the orthogonal type and leaves a nondegenerate space of dimension
$2m$.

Let $[u]$ be a vertex of $\Gamma_1(p)$, i.e.\ $[u]\neq[p]$ and $B(p,u)\neq0$.  Since $B(p,u)\ne0$, there is a unique
representative of this point satisfying $B(p,u)=1$.  Write this normalized vector in the decomposition $V = \langle p,f\rangle \perp W$ as
\[
u = \alpha p + \beta f + x \qquad (\alpha,\beta\in\mathbb{F}_q,\; x\in W).
\]
Using $B(p,f)=1$, $B(p,p)=0$ and $x\perp p$, we obtain
\[
1 = B(p,u) = \beta B(p,f) = \beta,
\]
so $\beta=1$.  Now the singularity of $u$ gives
\[
0 = Q(u) =B(\alpha p + f,x)+ Q(\alpha p + f) + Q_W(x) = \alpha + Q_W(x),
\]
because $Q(\alpha p+f) = \alpha^2 Q(p) + Q(f) + \alpha B(p,f) = \alpha$ (recall $Q(p)=Q(f)=0$ and $B(p,f)=1$).  Hence $\alpha = Q_W(x)$, and
\[
u = f + x + Q_W(x)p =: v_x.
\]
Thus every neighbour of $[p]$ can be written uniquely as $[v_x]$ for some $x\in W$. Conversely, for any $x\in W$ one checks directly that
$$
Q(v_x) = Q(f + x + Q_W(x)p) =B(f +  Q_W(x)p,x)+Q(f +  Q_W(x)p)+Q_W(x)
$$
$$
=B(f,Q_W(x)p)+Q(f)+Q(Q_W(x)p)+Q_W(x)=Q_W(x)+Q_W(x)=0
$$
and $B(p,v_x) = B(p,f)=1$, so $[v_x]$ is indeed adjacent to $[p]$.  This proves the bijectivity of \eqref{eq:affine-chart}.

Using $B(p,f)=1$, the orthogonality of $W$ to $H$, and
characteristic two, we obtain
\begin{align*}
 B(v_x,v_y)
 &=B(x,y)+Q_W(x)+Q_W(y)\\
 &=Q_W(x+y)=Q_W(x-y).
\end{align*}
Thus two distinct chart points are adjacent in $\Gamma_1(p)$ exactly when
their difference is nonsingular, proving \eqref{eq:local-cayley}.
\end{proof}

\begin{remark}
The normalization $B(p,u)=1$ is what turns a projective neighbourhood into
an affine space.  No choice of a square root is involved.  Although the
Frobenius map $z\mapsto z^2$ is bijective on $\F_q$, that fact is not needed
for the chart itself.
\end{remark}

\section{Parameters of the first subconstituent}\label{sec:parameters}

The chart of Proposition $\ref{prop:chart}$ also makes the strongly regular parameters immediate.  We record
them both as a check and for comparison with earlier computations of
subconstituents \cite{ZhouGuWan}.

For $m\ge2$, the affine polar graph $VO_{2m}^{\eps}(q)$ has parameters
\begin{align}
 v_0&=q^{2m},\label{eq:vo-v}\\
 k_0&=(q^m-\eps)(q^{m-1}+\eps),\label{eq:vo-k}\\
 \lambda_0&=q(q^{m-1}-\eps)(q^{m-2}+\eps)+q-2,\label{eq:vo-lambda}\\
 \mu_0&=q^{m-1}(q^{m-1}+\eps).\label{eq:vo-mu}
\end{align}
Applying the standard complement transformation \cite{BCN,GodsilRoyle}
\[
 (v,k,\lambda,\mu)\longmapsto
 (v,v-k-1,v-2k+\mu-2,v-2k+\lambda)
\]
and simplifying gives the following result.

\begin{proposition}\label{prop:local-parameters}
If $m\ge2$, then $\Gamma_1(p)$ is strongly regular with parameters
\begin{align}
 v&=q^{2m},\label{eq:local-v}\\
 k&=(q-1)q^{m-1}(q^m-\eps),\label{eq:local-k}\\
 \mu&=(q-1)q^{m-1}\bigl((q-1)q^{m-1}-\eps\bigr),
 \label{eq:local-mu}\\
 \lambda&=\mu-\eps(q-2)q^{m-1}.
 \label{eq:local-lambda}
\end{align}
\end{proposition}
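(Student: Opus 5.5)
By Proposition~\ref{prop:chart}, $\Gamma_1(p)$ is isomorphic to the complement of $VO_{2m}^{\eps}(q)$. The plan is therefore to take the parameters \eqref{eq:vo-v}--\eqref{eq:vo-mu} of the affine polar graph as given. We then apply the complement transformation $(v,k,\lambda,\mu)\mapsto(v,v-k-1,v-2k+\mu-2,v-2k+\lambda)$ and simplify each of the four resulting expressions into the stated closed forms. Strong regularity is inherited, since the complement of a strongly regular graph is strongly regular; for $m\ge2$ the graph $VO_{2m}^{\eps}(q)$ is neither complete nor empty, so no degenerate case arises.

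For the degree we compute $v-k_0-1=q^{2m}-1-(q^m-\eps)(q^{m-1}+\eps)$. Using $\eps^2=1$, expanding gives $(q^m-\eps)(q^{m-1}+\eps)=q^{2m-1}+\eps q^m-\eps q^{m-1}-1$. Hence
\[
v-k_0-1=q^{2m}-q^{2m-1}-\eps q^m+\eps q^{m-1}=(q-1)q^{m-1}(q^m-\eps),
\]
which is \eqref{eq:local-k}. As an independent check, $k$ is the number of nonsingular vectors of $W$. This equals $q^{2m}$ minus the $q^{2m-1}+\eps q^m-\eps q^{m-1}$ singular vectors (including $0$).

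For $\mu=v-2k_0+\lambda_0$, we first expand $\lambda_0=q(q^{2m-3}+\eps q^{m-1}-\eps q^{m-2}-1)+q-2$, which simplifies to $q^{2m-2}+\eps q^m-\eps q^{m-1}-2$. Then
\[
\mu=q^{2m}-2q^{2m-1}-2\eps q^m+2\eps q^{m-1}+2+q^{2m-2}+\eps q^m-\eps q^{m-1}-2,
\]
which reduces to $q^{2m-2}(q-1)^2-\eps q^{m-1}(q-1)$. This factors as $(q-1)q^{m-1}\bigl((q-1)q^{m-1}-\eps\bigr)$, giving \eqref{eq:local-mu}.

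For $\lambda$ we avoid a full re-expansion and compute the difference
\[
\lambda-\mu=(v-2k_0+\mu_0-2)-(v-2k_0+\lambda_0)=\mu_0-\lambda_0-2.
\]
Here $\mu_0=q^{2m-2}+\eps q^{m-1}$, so $\mu_0-\lambda_0-2=2\eps q^{m-1}-\eps q^m=-\eps(q-2)q^{m-1}$. This yields \eqref{eq:local-lambda}.

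There is no real obstacle here; the only care needed is the sign bookkeeping with $\eps$, using $\eps^2=1$. As a sanity check we would verify the identity $k(k-\lambda-1)=(v-k-1)\mu$ in a small case such as $q=2$, $m=2$.
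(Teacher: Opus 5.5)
Your proposal is correct and follows the paper's own route exactly. You invoke Proposition~\ref{prop:chart} to identify $\Gamma_1(p)$ with $\overline{VO(W,Q_W)}$, take \eqref{eq:vo-v}--\eqref{eq:vo-mu} as known, and push them through the complement transformation; your expansions, including the shortcut $\lambda-\mu=\mu_0-\lambda_0-2=-\eps(q-2)q^{m-1}$, check out and supply the simplification the paper leaves implicit.
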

In \cite[Theorem~3.9]{ZhouGuWan}, the parameters $a$ and $c$ correspond to $\lambda$ and $\mu$, respectively, in the notation of Proposition~\ref{prop:local-parameters}.

When $m=1$ and $\eps=+1$, the coordinates in
\eqref{eq:plus-form} give $Q_W(s,t)=st$.  Hence
\begin{equation}\label{eq:grid-adjacency}
 (s,t)\sim(s',t')\quad\Longleftrightarrow\quad
 s\ne s'\text{ and }t\ne t'.
\end{equation}
This graph is the complement of the Hamming graph $H(2,q)$ and has
parameters
\begin{equation}\label{eq:rank-one-plus-parameters}
 \bigl(q^2,(q-1)^2,(q-2)^2,(q-1)(q-2)\bigr).
\end{equation}
For $m=1$ and $\eps=-1$, the form $Q_W$ is anisotropic, so every nonzero
difference is nonsingular and $\Gamma_1(p)=K_{q^2}$.

\section{The full automorphism group}\label{sec:automorphisms}

We first treat the uniform range.

\begin{theorem}\label{thm:local-aut}
Let $m\ge2$.  Under the chart \eqref{eq:affine-chart},
\[
 \Aut\Gamma_1(p)=A\GammaO(W,Q_W)
 =W\rtimes\GammaO(W,Q_W).
\]
\end{theorem}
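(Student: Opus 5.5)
The plan is to transport the question through the chart of Proposition~\ref{prop:chart} and then apply Theorem~\ref{thm:affine-polar-aut}. The argument has three steps.

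\emph{Step 1: set up the transport.} The map $\iota:W\to V(\Gamma_1(p))$ is a bijection. By \eqref{eq:local-cayley}, $\iota$ is a graph isomorphism from $\overline{VO(W,Q_W)}$ onto $\Gamma_1(p)$. Conjugation by $\iota$ therefore gives a group isomorphism
\[
\Aut\Gamma_1(p)\cong\Aut\overline{VO(W,Q_W)},\qquad \phi\mapsto\iota^{-1}\phi\iota .
\]
This is the precise meaning of ``under the chart'' in the statement.

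\emph{Step 2: apply the rigidity theorem.} A graph and its complement have the same automorphism group. Since $\dim W=2m\ge4$, Theorem~\ref{thm:affine-polar-aut} applies and gives $\Aut\overline{VO(W,Q_W)}=A\GammaO(W,Q_W)$, acting by $x\mapsto Ax+b$. This completes the identification, and the semidirect product structure $W\rtimes\GammaO(W,Q_W)$ is the one fixed in Section~\ref{sec:preliminaries}.

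\emph{Step 3: check the easy containment directly.} For robustness I would verify independently that each $x\mapsto Ax+b$ preserves the graph. Suppose $A$ is $\sigma$-semilinear with multiplier $\lambda$. Then
\[
Q_W\bigl((Ax+b)-(Ay+b)\bigr)=\lambda\,\sigma\bigl(Q_W(x-y)\bigr).
\]
Because $\lambda\neq0$ and $\sigma$ is a field automorphism, this vanishes exactly when $Q_W(x-y)=0$. Hence nonsingular differences go to nonsingular differences, and the map is an automorphism of the complement graph. This containment $A\GammaO\le\Aut$ needs no external input.

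The only substantive point is the reverse containment: every graph automorphism is affine semilinear and preserves $Q_W$ up to multiplier and field automorphism. This is exactly what the cited $2$-closure theorem supplies, so the proof reduces to it and no new obstacle arises.

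The one thing that needs care is the hypothesis $m\ge2$. It is essential: for $m=1$ the graph $VO$ is a grid in plus type or an empty graph in minus type, and both have larger automorphism groups. These are the cases treated separately in Theorem~\ref{thm:main-intro}.
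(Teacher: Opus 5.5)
Your proposal is correct and is essentially the paper's proof: you identify $\Gamma_1(p)$ with $\overline{VO(W,Q_W)}$ via the chart of Proposition~\ref{prop:chart}, then invoke Theorem~\ref{thm:affine-polar-aut}, using that a graph and its complement have the same automorphism group. Your direct check in Step~3 of the inclusion $A\GammaO(W,Q_W)\le\Aut\Gamma_1(p)$ is a harmless extra verification that the paper leaves implicit.
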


\begin{proof}
By Proposition~\ref{prop:chart}, the local graph $\Gamma_1(p)$ is the complement of
$VO(W,Q_W)$.  Theorem~\ref{thm:affine-polar-aut} gives its full
automorphism group.
\end{proof}

\begin{corollary}\label{cor:order}
For $m\ge2$,
\[
 |\Aut\Gamma_1(p)|
 =2h(q-1)q^{m(m+1)}(q^m-\eps)
   \prod_{i=1}^{m-1}(q^{2i}-1).
\]
\end{corollary}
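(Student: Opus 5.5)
By Theorem~\ref{thm:local-aut}, $|\Aut\Gamma_1(p)|=|W|\cdot|\GammaO(W,Q_W)|=q^{2m}\,|\GammaO_{2m}^{\eps}(q)|$. The proof therefore reduces to computing the order of the semisimilarity group. I will do this by exhibiting it as an extension of the isometry group $\Orth_{2m}^{\eps}(q)$, and then quote the classical order of that group.

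The first step is to check that the map $\GammaO(W,Q_W)\to \Aut(\F_q)\ltimes\F_q^*$, sending $A$ to its pair $(\sigma,\lambda)$, is a homomorphism with kernel $\Orth(W,Q_W)$. Then I show it is surjective.

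For the field automorphisms, use the standard forms \eqref{eq:plus-form} and \eqref{eq:minus-form}. Applying $\sigma$ coordinatewise gives $Q\mapsto Q^{\sigma}$. In plus type, $Q^\sigma=\sigma\circ Q$ exactly. In minus type, $Q^\sigma$ has $\alpha$ replaced by $\sigma(\alpha)$. Since $\Tr(\sigma(\alpha))=\Tr(\alpha)=1$, the form $Q^\sigma$ is isometric to $Q$. Composing the coordinatewise map with such an isometry gives a $\sigma$-semilinear $A$ with multiplier $1$.

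For the multipliers, the map $(x,y)\mapsto(\lambda x,y)$ has multiplier $\lambda$ on the hyperbolic part. On the anisotropic plane $\F_{q^2}$ with norm form $N$, multiplication by an element of norm $\lambda$ does the same; the norm $\F_{q^2}^*\to\F_q^*$ is surjective. Alternatively, every $\lambda$ is a square in even characteristic, so the scalar $\sqrt\lambda\,I$ already has multiplier $\lambda$. Either way the image is all $h(q-1)$ pairs, and
\[
|\GammaO_{2m}^{\eps}(q)|=h(q-1)\,|\Orth_{2m}^{\eps}(q)|.
\]

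Next I insert the classical order
\[
|\Orth_{2m}^{\eps}(q)|=2q^{m(m-1)}(q^m-\eps)\prod_{i=1}^{m-1}(q^{2i}-1),
\]
which includes the factor $2$ from the quasideterminant (Dickson invariant) in even characteristic. Multiplying by $q^{2m}$ and $h(q-1)$, and using $m(m-1)+2m=m(m+1)$, gives exactly the stated formula.

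The only delicate point is the surjectivity onto $\Aut(\F_q)$ in minus type, that is, that $Q^\sigma\cong Q$. This follows because the type of a nondegenerate form is determined by the Arf invariant, and $\sigma$ preserves the Arf invariant since it preserves the absolute trace. One could also appeal to uniqueness of nondegenerate forms of each type, since $\sigma\circ Q\circ\sigma^{-1}$ is again nondegenerate of the same type. The rest is bookkeeping.
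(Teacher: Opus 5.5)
Your proposal is correct and follows essentially the same route as the paper: the map $A\mapsto(\sigma,\lambda)$ with kernel $\Orth_{2m}^{\eps}(q)$, surjectivity onto multipliers via scalars (every element of $\F_q^*$ is a square), surjectivity onto field automorphisms by twisting with $\sigma$ (the twisted minus-type form stays anisotropic because $\Tr_{\F_q/\F_2}(\sigma(\alpha))=1$) and composing with an isometry, and then the classical order of $\Orth_{2m}^{\eps}(q)$. Your coordinatewise Frobenius is a concrete version of the paper's twisted space $W^{(\sigma)}$, and your semidirect target $\Aut(\F_q)\ltimes\F_q^*$ is actually the accurate group structure, since the multiplier of a product is $\lambda_1\sigma_1(\lambda_2)$, whereas the paper writes a direct product; for the count all that matters is that the fibres are cosets of the kernel, which holds either way.
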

\begin{proof}
By Theorem~\ref{thm:local-aut}, $\Aut\Gamma_1(p) \cong A\GammaO(W,Q_W) = W \rtimes \GammaO(W,Q_W)$.  Hence
\[
|\Aut\Gamma_1(p)| = |W| \cdot |\GammaO_{2m}^{\eps}(q)| = q^{2m} \cdot |\GammaO_{2m}^{\eps}(q)|.
\]

We now compute $|\GammaO_{2m}^{\eps}(q)|$.  Every semisimilarity $A \in \GammaO_{2m}^{\eps}(q)$ consists of a $\sigma$-semilinear map ($\sigma \in \Aut(\F_q)$) and a multiplier $\lambda \in \F_q^*$ satisfying $Q_W(Ax) = \lambda \sigma(Q_W(x)),\ \forall x\in W$.  The map
\[
\GammaO_{2m}^{\eps}(q) \longrightarrow \Aut(\F_q) \times \F_q^*, \qquad
A \longmapsto (\sigma, \lambda),
\]
is a group homomorphism.  Its kernel consists of those $A$ with $\sigma =\mathrm{id}$ and $\lambda = 1$, i.e.\ $A$ is linear and preserves $Q_W$, which is precisely the orthogonal group $\Orth_{2m}^{\eps}(q)$.  Thus we have an exact sequence
\[
1 \longrightarrow \Orth_{2m}^{\eps}(q) \longrightarrow \GammaO_{2m}^{\eps}(q) \longrightarrow \Aut(\F_q) \times \F_q^*.
\]

The image of this homomorphism is all of $\Aut(\F_q) \times \F_q^*$.  Surjectivity onto $\F_q^*$ is seen by taking $\sigma = \mathrm{id}$ and $A$ to be scalar multiplication by $c \in \F_q^*$: then $Q_W(cx) = c^2 Q_W(x)$, and since squaring is an automorphism of $\F_q$ in characteristic two, every element of $\F_q^*$ is a square, hence every multiplier is realized.  Surjectivity onto $\Aut(\F_q)$ for the standard plus type form $\sum x_i y_i$ follows from a similar, and simpler, argument as in the minus type case. For the minus type, we show that for every field automorphism $\sigma\in\Aut(\F_q)$ there exists $A\in\GammaO_{2m}^{\eps}(q)$ with multiplier $\lambda=1$ and associated automorphism $\sigma$, i.e.\ $A$ is $\sigma$-semilinear and $Q_W(Ax)=\sigma(Q_W(x))$ for all $x\in W$.

The space $W$ splits as an orthogonal direct sum $W = H_{2m-2}\perp E$, where $H_{2m-2}$ is hyperbolic and $E$ is an anisotropic plane. Consider the twisted space $W^{(\sigma)}$: as an additive group it is $W$, but the scalar multiplication is defined by $c\cdot_\sigma x = \sigma^{-1}(c)x$. Equip $W^{(\sigma)}$ with the quadratic form
\[
Q_W^{(\sigma)}(x) = \sigma(Q_W(x)).
\]
A direct verification shows that $Q_W^{(\sigma)}$ is a nondegenerate quadratic form on $W^{(\sigma)}$.  Indeed, its polar form is $B^{(\sigma)}(x,y) = \sigma(B(x,y))$, and $\sigma$ preserves nondegeneracy.  The identity map $\mathrm{id}_\sigma \colon W \to W^{(\sigma)}$ is then a $\sigma$-semilinear isomorphism and satisfies
\[
Q_W^{(\sigma)}(\mathrm{id}_\sigma(x)) = \sigma(Q_W(x)).
\]

We now check that $Q_W^{(\sigma)}$ is again of minus type.  Under the orthogonal decomposition $W^{(\sigma)} = H_{2m-2}^{(\sigma)} \perp E^{(\sigma)}$, the hyperbolic part remains hyperbolic (the standard form $\sum x_i y_i$ is unchanged after twisting, up to an obvious isometry). For the anisotropic plane $E$, choose a basis $(e_1, e_2)$ such that
\[
Q_E(se_1+te_2) = \alpha s^2 + st + \alpha t^2,\ \forall s,t\in \F_q,
\]
where $\Tr_{\F_q/\F_2}(\alpha) = 1.$ Let a vector in $E^{(\sigma)}$ be written as $u \cdot_{\sigma} e_1 + v \cdot_{\sigma} e_2$, where $(e_1, e_2)$ is the same basis.
In the original space this corresponds to $x = \sigma^{-1}(u) e_1 + \sigma^{-1}(v) e_2$.
Then
\[
\begin{aligned}
Q_E^{(\sigma)}(u \cdot_{\sigma} e_1 + v \cdot_{\sigma} e_2) &= Q_E^{(\sigma)}(x) \\
&= \sigma\bigl(Q_E(\sigma^{-1}(u)e_1+\sigma^{-1}(v)e_2)\bigr)\\
&= \sigma\Bigl( \alpha (\sigma^{-1}(u))^2 + \sigma^{-1}(u)\sigma^{-1}(v) + \alpha (\sigma^{-1}(v))^2 \Bigr) \\
&= \sigma(\alpha) u^2 + uv + \sigma(\alpha) v^2.
\end{aligned}
\]  
Since
\[
\Tr_{\F_q/\F_2}(\sigma(\alpha)) = \sigma(\Tr_{\F_q/\F_2}(\alpha)) = \sigma(1) = 1,
\]
the trace condition is preserved, so $Q_E^{(\sigma)}$ is again anisotropic.  Hence $Q_W^{(\sigma)}$ is a nondegenerate quadratic space of dimension $2m$ and minus type.

Over a finite field, any two nondegenerate quadratic spaces of the same dimension and the same type are linearly isometric.  Therefore there exists a linear isometry
\[
\Phi \colon (W^{(\sigma)}, Q_W^{(\sigma)}) \longrightarrow (W, Q_W).
\]
(Here ``linear'' means with respect to the scalar multiplication $\cdot_\sigma$ on the domain and the original scalar multiplication on the codomain, i.e.\ $\Phi(c\cdot_\sigma x) = c\,\Phi(x)$.) Equivalently, this follows from the classification of affine quadrics in even characteristic(\cite[Theorem~1.36]{Wan}), since two homogeneous nondegenerate quadrics of the same type can be transformed into the same standard form by an affine transformation, which must be linear because the origin is preserved.

Now define $A = \Phi \circ \mathrm{id}_\sigma \colon W \to W$.  Then $A$ is $\sigma$-semilinear because:
\[
A(cx) = \Phi(\mathrm{id}_\sigma(cx)) = \Phi(\sigma(c)\cdot_\sigma \mathrm{id}_\sigma(x)) = \sigma(c)\,\Phi(\mathrm{id}_\sigma(x)) = \sigma(c) A(x).
\]
Moreover,
\[
Q_W(Ax) = Q_W(\Phi(\mathrm{id}_\sigma(x))) = Q_W^{(\sigma)}(\mathrm{id}_\sigma(x)) = \sigma(Q_W(x)).
\]
Thus $A \in \GammaO_{2m}^{\eps}(q)$ with multiplier $\lambda=1$ and the prescribed field automorphism $\sigma$.  This completes the proof of surjectivity for the minus type.

Therefore
\[
|\GammaO_{2m}^{\eps}(q)| = |\Aut(\F_q)| \cdot |\F_q^*| \cdot |\Orth_{2m}^{\eps}(q)|
= h \cdot (q-1) \cdot |\Orth_{2m}^{\eps}(q)|.
\]

The order of the even-characteristic orthogonal group is well known (\cite[Theorem~7.23]{Wan}):
\[
|\Orth_{2m}^{\eps}(q)| = 2 q^{m(m-1)} (q^m - \eps) \prod_{i=1}^{m-1} (q^{2i} - 1).
\]

Putting everything together,
\[
\begin{aligned}
|\Aut\Gamma_1(p)|
&= q^{2m} \cdot h (q-1) \cdot 2 q^{m(m-1)} (q^m - \eps) \prod_{i=1}^{m-1} (q^{2i} - 1)\\
&= 2 h (q-1) q^{m(m+1)} (q^m - \eps) \prod_{i=1}^{m-1} (q^{2i} - 1).
\end{aligned}
\]
\end{proof}

The residual two-dimensional cases have extra combinatorial symmetry.

\begin{proposition}\label{prop:low-rank-aut}
Let $m=1$.
\begin{enumerate}
\item If $\eps=+1$, then
\[
 \Aut\Gamma_1(p)=\Sym(q)\wr\Sym(2),\qquad
 |\Aut\Gamma_1(p)|=2(q!)^2.
\]
\item If $\eps=-1$, then
\[
 \Aut\Gamma_1(p)=\Sym(q^2),\qquad
 |\Aut\Gamma_1(p)|=(q^2)!.
\]
\end{enumerate}
\end{proposition}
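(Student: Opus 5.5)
For part (1) I will work in the chart of Proposition~\ref{prop:chart}, using the coordinates of \eqref{eq:plus-form}, so $W=\F_q^2$ and $Q_W(s,t)=st$. By \eqref{eq:grid-adjacency}, $\Gamma_1(p)$ is the complement of the rook's graph $K_q\square K_q=H(2,q)$, and complementation does not change the automorphism group. So the task is to show $\Aut(K_q\square K_q)=\Sym(q)\wr\Sym(2)$ in product action.

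The inclusion $\supseteq$ is immediate: independent permutations of the two coordinates and the swap $(s,t)\mapsto(t,s)$ preserve the relation ``equal in exactly one coordinate''. For $\subseteq$, first suppose $q\ge3$. The maximal cliques of $H(2,q)$ are exactly the $2q$ lines $\{s=c\}$ and $\{t=c\}$, each of size $q$. Indeed, three vertices that are pairwise collinear must lie on a common line, because two points on different lines through a given vertex differ in both coordinates. Two lines meet iff they are of different kinds, so the line graph is $K_{q,q}$, whose two sides are the two parallel classes. An automorphism therefore permutes the lines and either preserves or swaps the parallel classes. After composing with the swap if needed, it preserves both classes. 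Since each vertex is the intersection of a unique pair of lines, the automorphism is then determined by two permutations of $\F_q$, one per class, and so is of the stated form. The order $2(q!)^2$ follows.

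The main obstacle is $q=2$. There the rook's graph is $C_4$, and its complement $\Gamma_1(p)$ is $2K_2$, whose automorphism group is $\Sym(2)\wr\Sym(2)$ of order $8=2(2!)^2$. So the formula still holds, but the clique argument fails, since edges are themselves lines. I will check this case directly. It is also worth remarking, though not needed, that the product action of $\Sym(2)\wr\Sym(2)$ on $4$ points is the dihedral group of order $8$, which is indeed $\Aut(C_4)$.

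For part (2), $Q_W$ is anisotropic of type $-1$ on the plane $W$, so every nonzero difference is nonsingular. Hence $\Gamma_1(p)=K_{q^2}$, as already noted after \eqref{eq:rank-one-plus-parameters}, and $\Aut K_n=\Sym(n)$ gives $\Sym(q^2)$ of order $(q^2)!$.
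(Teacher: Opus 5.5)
Your proof is correct and follows the paper's route: identify $\Gamma_1(p)$ with $\overline{H(2,q)}$ via the chart, take the automorphism group of the Hamming graph (which you derive directly from the line/maximal-clique structure rather than citing it as standard), and observe that the minus case is $K_{q^2}$. Incidentally, $q=2$ is not an obstacle for your clique argument: the maximal cliques of $C_4$ are its four edges, these are exactly the lines, and their intersection graph is $K_{2,2}$, so the argument goes through unchanged and the separate check is unnecessary, though harmless.
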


\begin{proof}
In plus type, \eqref{eq:grid-adjacency} identifies the graph with the
complement of $H(2,q)$.  The standard automorphism theorem for Hamming
graphs gives the product-action wreath product
$\Sym(q)\wr\Sym(2)$ (for $q=2$ this is the dihedral group of order eight).
In minus type the graph is complete, and the claim is immediate.
\end{proof}

\begin{remark}\label{rem:exceptional-frame-stabilizer}
In the coordinates $Q_W(s,t)=st$, the normalization subgroup appearing in a
direct coordinate proof consists of
\[
 (s,t)\longmapsto(\tau(s),\pi(t)),
\]
where $\tau$ fixes $0$ and $1$, while $\pi$ fixes $0$.  It is therefore
isomorphic to $\Sym(q-2)\times\Sym(q-1)$.  In the present description this
is simply a frame stabilizer inside $\Sym(q)\wr\Sym(2)$, rather than an
additional structural component of the automorphism group.
\end{remark}

For comparison, the affine semisimilarity subgroups in the two exceptional
cases have orders
\begin{align}
 |A\GammaO_2^+(q)|&=2h q^2(q-1)^2,\label{eq:small-geometric-plus}\\
 |A\GammaO_2^-(q)|&=2h q^2(q^2-1).
 \label{eq:small-geometric-minus}
\end{align}
They equal the full local groups when $q=2$ and are proper when $q>2$.
This is why a theorem stated only in terms of semisimilarities must keep
the residual dimension-two cases separate.

\medskip
\noindent\textbf{Remark on earlier coordinate descriptions.}
In previous work on the minus type (see \cite{WanZhou},\ \cite{ZhouGuWan}) the local automorphism group was described after fixing a coordinate frame, leading to a subgroup defined by three equations in four matrix entries.  The affine-polar formulation shows that these equations are nothing but the semisimilarity condition for the anisotropic plane, written in a chosen basis.

Concretely, let $E$ be the anisotropic plane with quadratic form
\[
q_\alpha(u,v)=\alpha u^2+uv+\alpha v^2,\qquad \Tr_{\F_q/\F_2}(\alpha)=1,
\]
let $\sigma\in\Aut(\F_q)$, and let $M=\begin{pmatrix}c&\rho\\ d&\theta\end{pmatrix}\in\operatorname{GL}_2(q)$.
The semilinear map $z\mapsto Mz^\sigma$ (with $\sigma$ applied entrywise) belongs to $\Gamma O(E,q_\alpha)$ with multiplier $k\in\F_q^*$ precisely when
\[
q_\alpha(Mz^\sigma)=k\,\sigma(q_\alpha(z)) \qquad(z\in\F_q^2).
\]
Evaluating this identity on the two standard basis vectors gives the first and third equations below, while comparing their polar pairing yields the middle one:
\begin{equation}\label{eq:omega-equations-remark}
\begin{cases}
\alpha c^2+cd+\alpha d^2 = k\,\sigma(\alpha),\\[2pt]
c\theta+d\rho = k,\\[2pt]
\alpha\rho^2+\rho\theta+\alpha\theta^2 = k\,\sigma(\alpha).
\end{cases}
\end{equation}
Thus the apparently \emph{ad hoc} coordinate conditions that appeared in the earlier treatment are exactly the intrinsic semisimilarity relations for the anisotropic binary summand in~\eqref{eq:minus-form}.  Closure, inverses, and the action of field automorphisms are then automatic from this interpretation.

For the plus type, a similar phenomenon occurs: fixing a coordinate frame for the ambient orthogonal graph leaves a residual scalar and field-automorphism factor that earlier authors handled by an auxiliary normalization subgroup.  In the affine-polar picture this factor is simply the semisimilarity part $\Gamma O_{2m}^{+}(q)$ of the affine group, already packaged together with the orthogonal and translation parts; no extra normalisation is needed.
\section{Extension to the ambient graph}\label{sec:extension}

The affine description not only determines the local group; it gives an
explicit extension of every local automorphism in the uniform range.

For $b\in W$, define a linear map $t_b:V\to V$ on the decomposition
$V=\langle p,f\rangle\perp W$ by
\begin{align}
 t_b(p)&=p,\label{eq:tb-p}\\
 t_b(x)&=x+B(x,b)p &&(x\in W),\label{eq:tb-x}\\
 t_b(f)&=f+b+Q_W(b)p.\label{eq:tb-f}
\end{align}
These maps are often called Eichler transformations or orthogonal root
elations.

\begin{lemma}\label{lem:translation-extension}
The map $t_b$ is an isometry fixing $p$, and in the affine chart it induces
the translation $x\mapsto x+b$.
\end{lemma}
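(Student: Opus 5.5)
The plan is a direct verification on a spanning set, followed by a chart computation. Since $t_b$ is defined linearly on the decomposition $V=\langle p,f\rangle\perp W$, it is enough to check that $Q(t_b(v))=Q(v)$ for every $v\in V$. The cleanest route is to check that $Q$ is preserved on the generators $p$, $f$ and on $W$, and that the polar form $B$ is preserved on all pairs of generators. A quadratic form is determined by its values on a basis together with its polar form, because
\[
Q\Bigl(\sum_i v_i\Bigr)=\sum_i Q(v_i)+\sum_{i<j}B(v_i,v_j).
\]
So these checks are sufficient. Invertibility is then automatic: $t_b$ preserves the nondegenerate form $B$, so its kernel lies in $\rad B=0$. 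Alternatively, one can note directly that $t_{-b}=t_b$ is an inverse. That $t_b(p)=p$ holds by definition.

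For the individual checks I will use $Q(p)=Q(f)=0$, $B(p,f)=1$, $W\perp\langle p,f\rangle$ and characteristic two.
\begin{itemize}
\item For $x\in W$: $Q(x+B(x,b)p)=Q(x)+B(x,b)^2Q(p)+B(x,b)B(x,p)=Q(x)$.
\item For $f$: $Q(f+b+Q_W(b)p)$ was already computed in Proposition~\ref{prop:chart}, since $f+b+Q_W(b)p=v_b$. It equals $0=Q(f)$.
\item Pairings with $p$: $B(p,t_b(x))=0$ and $B(p,t_b(f))=B(p,f)=1$.
\item Pairs $x,y\in W$: $B(t_b(x),t_b(y))=B(x,y)$, because $p$ is orthogonal to $W$ and to itself.
\item Mixed pairs: $B(t_b(x),t_b(f))=B(x+B(x,b)p,\;f+b+Q_W(b)p)=B(x,b)+B(x,b)B(p,f)=2B(x,b)=0=B(x,f)$.
\end{itemize}
This mixed cancellation is the only step needing care; everything else is bookkeeping.

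For the chart statement, apply $t_b$ to $v_x=f+x+Q_W(x)p$. Linearity gives
\[
t_b(v_x)=f+b+Q_W(b)p+x+B(x,b)p+Q_W(x)p=f+(x+b)+\bigl(Q_W(x)+Q_W(b)+B(x,b)\bigr)p .
\]
The coefficient of $p$ is exactly $Q_W(x+b)$, by the definition \eqref{eq:polar} of the polar form. Hence $t_b(v_x)=v_{x+b}$, so $t_b$ induces $x\mapsto x+b$ under $\iota$.

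Because $t_b$ is an isometry fixing $p$, it maps singular points to singular points, preserves adjacency, and fixes $[p]$. It therefore preserves the neighbourhood of $[p]$ and the normalization $B(p,u)=1$. This is consistent with the computation above, which lands on normalized representatives directly. I do not expect any real obstacle: the whole proof is a few lines of polarization identities.
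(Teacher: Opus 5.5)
Your proof is correct and is essentially the paper's argument. The paper expands $Q\bigl(t_b(ap+x+cf)\bigr)=ac+Q_W(x)$ for a general vector in one polarization computation, in which the cancellation $cB(x,b)+B(x,cb)=0$ plays the role of your mixed-pair check, whereas you check values on generators and polar values on pairs. Your chart computation $t_b(v_x)=v_{x+b}$ is identical to the paper's, and your remark on invertibility (kernel in $\rad B=0$, or $t_b^2=\mathrm{id}$ in characteristic two) is a small point the paper leaves implicit.
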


\begin{proof}
For $a,c\in\F_q$ and $x\in W$,
\begin{align*}
 t_b(ap+x+cf)
  ={}&\bigl(a+B(x,b)+cQ_W(b)\bigr)p+(x+cb)+cf.
\end{align*}
Since $Q(ap+x+cf)=ac+Q_W(x)$, polarization gives
\begin{align*}
 Q\bigl(t_b(ap+x+cf)\bigr)
 &=c\bigl(a+B(x,b)+cQ_W(b)\bigr)+Q_W(x+cb)\\
 &=c\bigl(a+B(x,b)+cQ_W(b)\bigr)+(B(x,cb)+Q_W(x)+Q_W(cb))\\
 &=ac+Q_W(x).
\end{align*}
Thus $t_b$ is an isometry.  Furthermore,
\[
 t_b(v_x)=t_b(f+x+Q_W(x)p)=(Q_W(x)+B(x,b)+Q_W(b))p+(x+b)+f
\]
\[
=f+(x+b)+Q_W(x+b)p=v_{x+b}.
\]
\end{proof}

Now take $A\in\GammaO(W,Q_W)$, say $A$ is $\sigma$-semilinear and has
multiplier $\lambda$.  Define a $\sigma$-semilinear map
$\widehat A:V\to V$ by
\begin{equation}\label{eq:Ahat}
 \widehat A(ap+x+cf)=\lambda\sigma(a)p+A(x)+\sigma(c)f.
\end{equation}

\begin{lemma}\label{lem:linear-extension}
The map $\widehat A$ is a semisimilarity of $(V,Q)$ with multiplier
$\lambda$, it fixes $[p]$, and it induces $x\mapsto Ax$ in the affine chart.
\end{lemma}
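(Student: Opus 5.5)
The plan is a direct verification in three parts, mirroring the proof of Lemma~\ref{lem:translation-extension}: semilinearity and bijectivity, the semisimilarity identity for $Q$, and the action on $[p]$ and on the chart.

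\emph{Semilinearity and bijectivity.} Since $V=\langle p,f\rangle\perp W$ is a direct sum, every vector has a unique expression $ap+x+cf$. Hence \eqref{eq:Ahat} is well defined. Each component $a\mapsto\lambda\sigma(a)$, $x\mapsto Ax$, $c\mapsto\sigma(c)$ is additive and $\sigma$-semilinear, so $\widehat A$ is $\sigma$-semilinear. Its inverse is built the same way from $A^{-1}$, $\sigma^{-1}$ and the appropriate multiplier, so $\widehat A$ is bijective.

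\emph{Semisimilarity.} We have $Q(ap+x+cf)=ac+Q_W(x)$, as in the proof of Lemma~\ref{lem:translation-extension}. The image has $p$-coefficient $\lambda\sigma(a)$, $W$-part $Ax$, and $f$-coefficient $\sigma(c)$, so
\[
Q\bigl(\widehat A(ap+x+cf)\bigr)=\lambda\sigma(a)\sigma(c)+Q_W(Ax)=\lambda\sigma(ac)+\lambda\sigma(Q_W(x))=\lambda\,\sigma\bigl(Q(ap+x+cf)\bigr).
\]
This is exactly the semisimilarity condition with multiplier $\lambda$.

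\emph{Action on $[p]$ and on the chart.} Clearly $\widehat A(p)=\lambda p$, so $\widehat A$ fixes $[p]$. For the chart, $\widehat A(v_x)=\lambda\sigma(Q_W(x))p+Ax+f=f+Ax+Q_W(Ax)p=v_{Ax}$, using the multiplier identity once more. Since $\widehat A$ is a semisimilarity, it maps singular points to singular points. It also preserves nonvanishing of $B$, because $B(\widehat Au,\widehat Av)=\lambda\sigma(B(u,v))$ by polarization. Hence it induces an automorphism of $\calO(V,Q)$ fixing $[p]$, and its restriction to $\Gamma_1(p)$ is $x\mapsto Ax$ under $\iota$.

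The only point that needs care is placing the multiplier on the $p$-coordinate rather than on $f$. This is what makes $v_x$ map to the normalized representative $v_{Ax}$ with no rescaling, so no step is a genuine obstacle.
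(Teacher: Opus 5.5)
Your proposal is correct and follows the paper's proof essentially verbatim. It uses the same computation $Q(\widehat A(ap+x+cf))=\lambda\sigma(a)\sigma(c)+Q_W(Ax)=\lambda\sigma(Q(ap+x+cf))$ and the same evaluation $\widehat A(v_x)=v_{Ax}$; your extra checks (bijectivity via the analogous map built from $A^{-1}$, and $\widehat A(p)=\lambda p$) are details the paper leaves implicit.
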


\begin{proof}
We have
\begin{align*}
 Q\bigl(\widehat A(ap+x+cf)\bigr)
 &=Q(\lambda\sigma(a)p+A(x)+\sigma(c)f)\\
 &=\lambda\sigma(a)\sigma(c)+Q_W(Ax)\\
 &=\lambda\sigma\bigl(ac+Q_W(x)\bigr)\\
 &=\lambda\sigma(Q(ap+x+cf)).
\end{align*}
Also
\[
 \widehat A(v_x)=\widehat A(f+x+Q_W(x)p) =f+Ax+\lambda\sigma(Q_W(x))p
\]
\[
 =f+Ax+Q_W(Ax)p=v_{Ax}.
\]
\end{proof}

\begin{theorem}\label{thm:ambient-extension}
If $m\ge2$, every automorphism of $\Gamma_1(p)$ extends to a projective
semisimilarity of $(V,Q)$ fixing $[p]$, and hence to an automorphism of
$\calO(V,Q)$ fixing $[p]$.  In particular,
\begin{equation}\label{eq:ambient-restriction}
 \Aut\Gamma_1(p)
 =\restr{\Aut(\calO(V,Q))_{[p]}}{V(\Gamma_1(p))}.
\end{equation}
\end{theorem}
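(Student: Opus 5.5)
By Theorem~\ref{thm:local-aut}, every automorphism of $\Gamma_1(p)$ for $m\ge2$ acts in the affine chart $\iota$ of Proposition~\ref{prop:chart} as an affine map $x\mapsto Ax+b$ with $A\in\GammaO(W,Q_W)$ and $b\in W$. The plan is therefore to lift the two factors separately and compose. Write the automorphism as the composite of $x\mapsto Ax$ followed by the translation $x\mapsto x+b$. Lemma~\ref{lem:linear-extension} supplies a semisimilarity $\widehat A$ of $(V,Q)$ that fixes $[p]$ and induces $x\mapsto Ax$ in the chart. Lemma~\ref{lem:translation-extension} supplies an isometry $t_b$ that fixes $p$ and induces $x\mapsto x+b$. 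The composite $g=t_b\widehat A$ is then a $\sigma$-semilinear semisimilarity of $(V,Q)$ fixing $[p]$, and it satisfies $g(v_x)=v_{Ax+b}$. Hence the induced projective map agrees with the given automorphism on every vertex $\iota(x)$.

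The second step is to check that a projective semisimilarity induces an automorphism of $\calO(V,Q)$. Suppose $g$ is $\sigma$-semilinear with multiplier $\lambda$. Then $Q(gv)=\lambda\sigma(Q(v))$, so $g$ maps singular points to singular points bijectively, since $g$ is invertible. Polarizing gives $B(gu,gv)=\lambda\sigma(B(u,v))$, so $g$ preserves both nonvanishing and vanishing of $B$. Thus $g$ preserves adjacency and non-adjacency. It is also well defined on projective points, because semilinearity sends $[v]$ to $[gv]$. Since $g$ fixes $[p]$, it preserves the neighbourhood of $[p]$, and its restriction to that neighbourhood is the given automorphism.

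This establishes the inclusion $\Aut\Gamma_1(p)\subseteq\restr{\Aut(\calO(V,Q))_{[p]}}{V(\Gamma_1(p))}$. For the reverse inclusion, any automorphism of $\calO(V,Q)$ fixing $[p]$ maps the neighbour set of $[p]$ to itself. It preserves adjacency among those neighbours, so its restriction lies in $\Aut\Gamma_1(p)$. Together the two inclusions give \eqref{eq:ambient-restriction}.

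The only step needing care is the decomposition in the first paragraph: the automorphism must be written in the order matching how the two lifts compose. Explicitly, $t_b\widehat A$ sends $v_x$ first to $v_{Ax}$ and then to $v_{Ax+b}$, which is the required order, so no conjugation is needed. No further obstacle arises, because Theorem~\ref{thm:local-aut}, which rests on Theorem~\ref{thm:affine-polar-aut}, already guarantees that every local automorphism has this affine form.
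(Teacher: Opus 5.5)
Your proposal is correct and follows the paper's proof: you write the local automorphism as $x\mapsto Ax+b$ via Theorem~\ref{thm:local-aut}, lift it as $t_b\widehat A$ using Lemmas~\ref{lem:translation-extension} and~\ref{lem:linear-extension}, and obtain the reverse inclusion by restricting ambient automorphisms fixing $[p]$ to the neighbourhood of $[p]$. You also check explicitly that a semisimilarity preserves singularity and the (non)vanishing of $B$, via $B(gu,gv)=\lambda\sigma(B(u,v))$, so that it induces an automorphism of $\calO(V,Q)$; the paper leaves this step implicit.
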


\begin{proof}
By Theorem~\ref{thm:local-aut}, every local automorphism is
$x\mapsto Ax+b$ with $A\in\GammaO(W,Q_W)$ and $b\in W$.  The composition
$t_b\widehat A$ extends it by Lemmas~\ref{lem:translation-extension} and
\ref{lem:linear-extension}.  Conversely, every ambient graph automorphism
fixing $[p]$ restricts to an automorphism of the induced graph on its
neighbourhood.  This proves equality.
\end{proof}

The same restriction equality remains true in residual dimension two,
although not every local automorphism is geometric.

\begin{proposition}\label{prop:small-ambient-extension}
Equation \eqref{eq:ambient-restriction} holds when $m=1$.
\end{proposition}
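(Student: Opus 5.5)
The plan is to identify the ambient graph $\calO(V,Q)$ explicitly when $\dim V=4$, compute $\Aut(\calO(V,Q))_{[p]}$, and check that restricting to the neighbourhood of $[p]$ gives all of the group found in Proposition~\ref{prop:low-rank-aut}. One inclusion is free: any ambient automorphism fixing $[p]$ preserves the neighbourhood of $[p]$, so it restricts to an automorphism of $\Gamma_1(p)$. So only surjectivity of the restriction map needs work, and I treat the two types separately.

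\emph{Minus type.} Here $V$ has Witt index $1$, so there is no totally singular $2$-dimensional subspace. Let $[u]\ne[v]$ be singular points. If $B(u,v)=0$, then $Q(au+bv)=abB(u,v)=0$ for all $a,b$, so $\langle u,v\rangle$ would be totally singular, which is impossible. Hence $\calO(V,Q)$ is the complete graph $K_{q^2+1}$, and $\Aut(\calO(V,Q))=\Sym(q^2+1)$. The stabilizer of $[p]$ is $\Sym(q^2)$ acting on the remaining $q^2$ points. These points are exactly $V(\Gamma_1(p))$, so the restriction map is onto $\Aut\Gamma_1(p)=\Sym(q^2)$.

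\emph{Plus type.} I realize $(V,Q)$ as the space of $2\times2$ matrices over $\F_q$ with $Q=\det$. This form is $x_1y_1+x_2y_2$ after a relabelling of coordinates, so it is hyperbolic. Singular points are the rank-one matrices $[uv^{T}]$, which correspond bijectively to pairs $([u],[v])\in\PG(1,q)\times\PG(1,q)$. For rank-one matrices the polar form factorizes:
\[
B(uv^{T},u'v'^{T})=\det(u\,|\,u')\det(v\,|\,v')
\]
(signs are irrelevant in characteristic two). I will verify this by a short direct expansion. It shows that $[uv^T]$ and $[u'v'^T]$ are adjacent in $\calO(V,Q)$ exactly when $[u]\ne[u']$ and $[v]\ne[v']$. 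So $\calO(V,Q)$ is the complement of the $(q+1)\times(q+1)$ rook's graph $H(2,q+1)$.

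Every element of $\Sym(q+1)\wr\Sym(2)$, acting in product action on $\PG(1,q)^2$, is therefore an automorphism of $\calO(V,Q)$. Write $[p]=([u_0],[v_0])$. Its neighbours are the pairs with $[u]\ne[u_0]$ and $[v]\ne[v_0]$, which form a $q\times q$ grid; this recovers \eqref{eq:grid-adjacency}. Now take any local automorphism, which by Proposition~\ref{prop:low-rank-aut} has the form $(s,t)\mapsto(\tau(s),\pi(t))$, possibly followed by the coordinate swap. Extend $\tau$ and $\pi$ to permutations of $\PG(1,q)$ fixing $[u_0]$ and $[v_0]$ respectively. Since the swap fixes the diagonal point $([u_0],[v_0])$ after identifying both factors, this gives an ambient automorphism fixing $[p]$ that restricts to the given map. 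This proves \eqref{eq:ambient-restriction}.

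The main obstacle is making sure that the identification of the local grid coordinates $(s,t)$ from \eqref{eq:grid-adjacency} with the two $\PG(1,q)$ coordinates is compatible. I will not check this through the chart $\iota$. Instead I will work intrinsically: the set of neighbours of $[p]$ is the same subset of vertices in both descriptions, and Proposition~\ref{prop:low-rank-aut} says $\Aut\Gamma_1(p)$ is the full automorphism group of the induced $q\times q$ rook complement. So it suffices that the ambient stabilizer induces all of $\Sym(q)\wr\Sym(2)$ on that grid, which the extension argument above provides. The only computation left is the determinant factorization of $B$ on rank-one matrices.
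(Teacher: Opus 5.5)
Your proof is correct and follows the paper's own argument. In plus type you identify $\calO(V,Q)$ with $\overline{H(2,q+1)}$ on $\PG(1,q)\times\PG(1,q)$; your determinant model and the factorization $B(uv^{T},u'v'^{T})=\det(u\,|\,u')\det(v\,|\,v')$ make the paper's one-line identification explicit. You then check that the stabilizer of $([u_0],[v_0])$ induces all of $\Sym(q)\wr\Sym(2)$, with the swap realized through an identification of the two factors sending $[u_0]$ to $[v_0]$. In minus type, the absence of totally singular lines makes $\calO(V,Q)$ complete, exactly as in the paper.
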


\begin{proof}
In plus type the singular points of the four-dimensional hyperbolic quadric
can be identified with pairs in
$\PG(1,q)\times\PG(1,q)$.  Two vertices are adjacent in our convention
exactly when both coordinates differ.  Thus
\[
 \calO(V,Q)\cong\overline{H(2,q+1)}.
\]
Its automorphism group contains
$\Sym(q+1)\wr\Sym(2)$.  The stabilizer of one pair induces
$\Sym(q)\wr\Sym(2)$ on the $q^2$ neighbours, which is the full local group
by Proposition~\ref{prop:low-rank-aut}.

In minus type the four-dimensional elliptic quadric has $q^2+1$ points and
contains no singular projective line.  Hence every two distinct singular
points have nonzero polar pairing, so $\calO(V,Q)=K_{q^2+1}$.  Its point
stabilizer induces $\Sym(q^2)$ on the neighbourhood.
\end{proof}

\section{Translation to the notation \texorpdfstring{$O(2\nu+\delta,q)$}{O(2nu+delta,q)}}
\label{sec:translation}

We now translate the coordinate-free statement back to the customary
notation of the characteristic-two orthogonal graphs in \cite{ZhouGuWan}.  Let
$\delta\in\{0,2\}$ and consider $O(2\nu+\delta,q)$.  Removing the hyperbolic
plane through the chosen singular point gives
\begin{equation}\label{eq:dictionary}
 m=\nu-1+\frac{\delta}{2},\qquad
 \eps=\begin{cases}
 +1,&\delta=0,\\
 -1,&\delta=2.
 \end{cases}
\end{equation}

\begin{corollary}\label{cor:original-notation}
Suppose either $\delta=0$ and $\nu\ge3$, or $\delta=2$ and $\nu\ge2$.
Then
\[
 \Aut\Gamma_1
 \cong \F_q^{2m}\rtimes\GammaO_{2m}^{\eps}(q),
\]
with $m$ and $\eps$ given by \eqref{eq:dictionary}, and
\begin{equation}\label{eq:old-order}
 |\Aut\Gamma_1|
 =(q-1)q^{\nu(\nu+\delta-1)}
 \prod_{i=1}^{\nu-1}(q^i-1)
 \prod_{i=0}^{\nu+\delta-2}(q^i+1)h.
\end{equation}
Moreover, this group is the restriction of the ambient point stabilizer.
\end{corollary}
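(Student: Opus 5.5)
The plan is to reduce the corollary to Theorem~\ref{thm:local-aut}, Corollary~\ref{cor:order} and Theorem~\ref{thm:ambient-extension}, via the dictionary \eqref{eq:dictionary}, and then to check that the two order formulas agree.

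\emph{Step 1: the parameters.} The ambient space of $O(2\nu+\delta,q)$ has dimension $2\nu+\delta$. Splitting off the hyperbolic plane $\langle p,f\rangle$ leaves a space of dimension $2\nu+\delta-2=2m$, with $m=\nu-1+\delta/2$. By Proposition~\ref{prop:chart} this space has the same type as $V$, namely $\eps=+1$ for $\delta=0$ and $\eps=-1$ for $\delta=2$. The hypotheses translate to $m\ge2$ in both cases. If $\delta=0$ and $\nu\ge3$, then $m=\nu-1\ge2$. If $\delta=2$ and $\nu\ge2$, then $m=\nu\ge2$.

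\emph{Step 2: the group.} Since $m\ge2$, Theorem~\ref{thm:local-aut} gives the isomorphism $\Aut\Gamma_1\cong W\rtimes\GammaO_{2m}^{\eps}(q)$, and $W\cong\F_q^{2m}$ as an additive group. Theorem~\ref{thm:ambient-extension} gives the final sentence, that this group is the restriction of the ambient point stabilizer.

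\emph{Step 3: the order.} The only real work is showing that Corollary~\ref{cor:order} agrees with \eqref{eq:old-order}. Three observations do it.
\begin{itemize}
\item The factor $i=0$ in $\prod_{i=0}^{\nu+\delta-2}(q^i+1)$ equals $2$, which matches the factor $2$ in Corollary~\ref{cor:order}.
\item The exponent matches: $m(m+1)$ equals $(\nu-1)\nu=\nu(\nu+\delta-1)$ when $\delta=0$, and $\nu(\nu+1)=\nu(\nu+\delta-1)$ when $\delta=2$.
\item One factors $\prod_{i=1}^{m-1}(q^{2i}-1)=\prod_{i=1}^{m-1}(q^i-1)(q^i+1)$.
\end{itemize}

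For $\delta=0$ we have $m=\nu-1$. The extra factor $q^m-\eps=q^{\nu-1}-1$ completes $\prod_{i=1}^{\nu-2}(q^i-1)$ to $\prod_{i=1}^{\nu-1}(q^i-1)$. What remains is $\prod_{i=1}^{\nu-2}(q^i+1)$, which matches $\prod_{i=1}^{\nu+\delta-2}(q^i+1)$.

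For $\delta=2$ we have $m=\nu$. The factor $q^\nu+1$ completes $\prod_{i=1}^{\nu-1}(q^i+1)$ to $\prod_{i=1}^{\nu}(q^i+1)=\prod_{i=1}^{\nu+\delta-2}(q^i+1)$. The minus-sign product is already $\prod_{i=1}^{\nu-1}(q^i-1)$.

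The factors $h$ and $q-1$ correspond directly. I do not expect any genuine obstacle; the only point needing care is this index bookkeeping, in particular tracking the stray factor $2$ coming from $i=0$.
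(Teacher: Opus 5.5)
Your proposal is correct and follows the paper's proof. You translate the hypotheses into $m\ge2$ and then take the group isomorphism and the restriction claim from Theorems~\ref{thm:local-aut} and \ref{thm:ambient-extension}. You get the order by substituting the dictionary into Corollary~\ref{cor:order}, factoring $q^{2i}-1=(q^i-1)(q^i+1)$, and identifying the factor $2$ with the $i=0$ term $q^0+1$. Your explicit bookkeeping for $\delta=0$ and $\delta=2$ is the computation the paper leaves implicit, and it checks out.
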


\begin{proof}
The group statement follows from Theorems~\ref{thm:local-aut} and
\ref{thm:ambient-extension}.  To obtain \eqref{eq:old-order}, substitute
\eqref{eq:dictionary} into Corollary~\ref{cor:order} and use
\[
 q^{2i}-1=(q^i-1)(q^i+1).
\]
The factor $2$ in Corollary~\ref{cor:order} is the $i=0$ factor
$q^0+1$ in \eqref{eq:old-order}.
\end{proof}

The remaining cases are now unambiguous:
\begin{itemize}
\item $(\nu,\delta)=(2,0)$ is the residual hyperbolic plane and has group
$\Sym(q)\wr\Sym(2)$;
\item $(\nu,\delta)=(2,2)$ is not exceptional: it is the case
$m=2$, $\eps=-1$ of Corollary~\ref{cor:original-notation};
\item $(\nu,\delta)=(1,2)$ is the residual anisotropic plane and the local
graph is $K_{q^2}$.
\end{itemize}

\section{Independent sets, colorings, and ovoidal cliques}
\label{sec:extremal}

We now turn from automorphisms to extremal sets.  Throughout this section
we identify the local graph with
\[
 G=\Cay\bigl(W,\{z:Q_W(z)\ne0\}\bigr)
\]
by Proposition~\ref{prop:chart}.  Let $r$ be the Witt index of $W$, so
$r=m$ in plus type and $r=m-1$ in minus type.

The affine description of $G$ leads directly to the following description of its maximum independent sets and its chromatic number.
\begin{theorem}
\label{thm:independent-coloring}
Every independent set of $G$ is contained in a coset of a totally singular
subspace of $W$.  Consequently
\[
 \alpha(G)=q^r,
 \qquad
 \chi(G)=q^{2m-r}.
\]
Equality in the first formula occurs precisely for the affine cosets of
maximal totally singular subspaces.  Equivalently,
\begin{align*}
 \eps=+1:&\quad \alpha(G)=q^m,\quad \chi(G)=q^m,\\
 \eps=-1:&\quad \alpha(G)=q^{m-1},\quad \chi(G)=q^{m+1}.
\end{align*}
\end{theorem}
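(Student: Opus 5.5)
We work entirely in the chart of Proposition~\ref{prop:chart}, where $G=\Cay(W,\{z:Q_W(z)\ne0\})$. The first step is the structural claim about independent sets. Let $I$ be independent. Since translations are automorphisms, we may assume $0\in I$. Independence says $Q_W(x-y)=0$ for all $x,y\in I$; with $y=0$ this gives $Q_W(x)=0$ for every $x\in I$. Polarization then yields
\[
B(x,y)=Q_W(x+y)+Q_W(x)+Q_W(y)=0\qquad(x,y\in I).
\]
Let $U$ be the $\F_q$-linear span of $I$. The bilinear form $B$ vanishes on $U\times U$ by bilinearity, so $Q_W$ is additive on $U$. Moreover $Q_W(cx)=c^2Q_W(x)=0$ for $x\in I$, so $Q_W$ vanishes on $U$. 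Hence $U$ is totally singular and $I\subseteq U$. Conversely, every coset of a totally singular subspace is independent. Totally singular subspaces have dimension at most the Witt index $r$. Therefore $|I|\le q^r$, with equality only if $I$ is an entire coset of a totally singular subspace of dimension $r$, that is, of a maximal one. Since such subspaces exist, $\alpha(G)=q^r$.

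For the chromatic number, the lower bound is $\chi(G)\ge |W|/\alpha(G)=q^{2m}/q^r=q^{2m-r}$. For the upper bound, fix a maximal totally singular subspace $U$ of dimension $r$. The $q^{2m-r}$ cosets of $U$ partition $W$, and each coset is independent, so together they form a proper coloring with $q^{2m-r}$ colors. This gives $\chi(G)=q^{2m-r}$.

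Finally, we substitute $r=m$ for $\eps=+1$ and $r=m-1$ for $\eps=-1$. This gives $q^m,q^m$ and $q^{m-1},q^{m+1}$ respectively.

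The only point needing care is the maximal dimension of a totally singular subspace. It equals the Witt index, which is $m$ or $m-1$ from the forms \eqref{eq:plus-form} and \eqref{eq:minus-form} by Witt's theorem. This is standard. The main subtlety is that the span argument is genuinely linear over $\F_q$: this uses $c^2Q_W(x)=0$ and the vanishing of $B$, rather than just closure under addition. The argument also covers $m=1$ uniformly. In the elliptic case $r=0$, giving $\alpha=1$ and $\chi=q^2$, consistent with $G=K_{q^2}$.
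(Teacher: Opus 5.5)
Your proposal is correct and follows essentially the same route as the paper: translate so that $0\in I$, use polarization to show the span of $I$ is totally singular, bound by the Witt index, and get $\chi$ from the coset partition of a maximal totally singular subspace together with $\chi\ge |W|/\alpha$. Your explicit check that $Q_W(cx)=c^2Q_W(x)=0$ makes the $\F_q$-linearity of the span argument slightly more careful than the paper's version, which only writes sums $z=\sum_i x_i$.
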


\begin{proof}
Recall that two distinct vertices $x,y\in W$ are adjacent in $G$ iff $Q_W(x-y)\neq0$. Hence a set $I\subseteq W$ is independent in $G$ precisely when $Q_W(x-y)=0$ for all $x\neq y$ in $I$.

Let $I$ be an independent set and pick $a\in I$. Translation $\tau_{-a}:x\mapsto x-a$ is a graph automorphism (it preserves differences and therefore adjacency). Replacing $I$ by $\tau_{-a}(I)$, we may assume $0\in I$. Then for any $x\in I\setminus\{0\}$, taking $y=0$ gives $Q_W(x)=Q_W(x-0)=0$. Moreover, for any $x,y\in I$, the independence condition yields $Q_W(x-y)=0$. Now the polar form of $Q_W$ satisfies
\[
B(x,y)=Q_W(x-y)-Q_W(x)-Q_W(y)=0-0-0=0.
\]
Thus the elements of $I$ are pairwise orthogonal with respect to $B$ and each of them is singular.

Let $U=\langle I\rangle$ be the linear span of $I$ in $W$. For any finite linear combination $z=\sum_i x_i$ with $x_i\in I$, the quadratic expansion in characteristic two gives
\[
Q_W(z)=\sum_i Q_W(x_i)+\sum_{i<j}B(x_i,x_j)=0,
\]
because each $Q_W(x_i)=0$ and $B(x_i,x_j)=0$. Hence $Q_W$ vanishes identically on $U$, so $U$ is a totally singular subspace of $W$. Its dimension is at most the Witt index $r$ of $(W,Q_W)$ (the maximal dimension of a totally singular subspace). Consequently $|I|\le|U|\le q^r$, which proves $\alpha(G)\le q^r$.

If $|I|=q^r$, then $|U|\ge|I|=q^r$, so $\dim U=r$ and $U$ is a maximal totally singular subspace. Since $0\in I\subseteq U$ and $|I|=|U|$, we must have $I=U$. Translating back, the original independent set is a coset of a maximal totally singular subspace.   Conversely, every coset of a totally singular subspace is independent, because the difference of any two vectors in such a coset lies in the subspace and hence is singular. Thus $\alpha(G)=q^r$, and equality holds exactly for the affine cosets of maximal totally singular subspaces.

To determine the chromatic number, fix a maximal totally singular subspace $U\le W$ of dimension $r$. The $q^{2m-r}$ distinct cosets $U+b$ partition $W$, and each coset is an independent set. Assigning a distinct colour to each coset yields a proper vertex colouring of $G$, so $\chi(G)\le q^{2m-r}$. On the other hand, since every colour class is an independent set of size at most $\alpha(G)=q^r$, we have the standard lower bound
\[
\chi(G)\ge\frac{|W|}{\alpha(G)}=\frac{q^{2m}}{q^r}=q^{2m-r}.
\]
Hence $\chi(G)=q^{2m-r}$.
\end{proof}

From the parameters in Section~\ref{sec:parameters} we obtain the nonprincipal eigenvalues of $G$ and the resulting spectral clique bounds summarized in the following lemma.
\begin{lemma}\label{lem:clique-bounds}
Assume $m\ge2$.  The two nonprincipal eigenvalues of $G$ are
\[
 \begin{array}{c|cc}
  &\text{positive}&\text{negative}\\ \hline
  \eps=+1&q^{m-1}&-(q-1)q^{m-1}\\
  \eps=-1&(q-1)q^{m-1}&-q^{m-1}.
 \end{array}
\]
The Delsarte--Hoffman bound therefore gives
\begin{align}
 \omega(G)&\le q^m &&(\eps=+1),\label{eq:plus-clique-bound}\\
 \omega(G)&\le q\bigl((q-1)q^{m-1}+1\bigr)
   =q^{m+1}-q^m+q &&(\eps=-1).
   \label{eq:minus-clique-bound}
\end{align}
In particular, the minus-type bound is strictly smaller than
$\chi(G)=q^{m+1}$.
\end{lemma}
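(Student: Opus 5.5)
The plan is to read the eigenvalues off the parameters of Proposition~\ref{prop:local-parameters}, apply the Delsarte--Hoffman clique bound, and compare the result with $\chi(G)$ from Theorem~\ref{thm:independent-coloring}.

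\emph{Eigenvalues.} For a primitive strongly regular graph with parameters $(v,k,\lambda,\mu)$, the nonprincipal eigenvalues $\theta>0>\tau$ are the roots of
\[
x^2-(\lambda-\mu)x-(k-\mu)=0 .
\]
By \eqref{eq:local-lambda} we have $\lambda-\mu=-\eps(q-2)q^{m-1}$. From \eqref{eq:local-k} and \eqref{eq:local-mu},
\[
k-\mu=(q-1)q^{m-1}\bigl(q^m-\eps-(q-1)q^{m-1}+\eps\bigr)=(q-1)q^{2m-2}.
\]
So the roots must have product $-(q-1)q^{2m-2}$ and sum $-\eps(q-2)q^{m-1}$. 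The pair $\{q^{m-1},-(q-1)q^{m-1}\}$ meets both conditions when $\eps=+1$. The pair $\{(q-1)q^{m-1},-q^{m-1}\}$ meets them when $\eps=-1$. This is a direct check and gives the table.

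An alternative cross-check uses Proposition~\ref{prop:chart}. The graph $G$ is the complement of $VO(W,Q_W)$, whose known eigenvalues $\theta_0,\tau_0$ become $-1-\tau_0$ and $-1-\theta_0$ under complementation.

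\emph{Clique bound.} For a $k$-regular graph on $v$ vertices with smallest eigenvalue $\tau$, the Delsarte clique bound is $\omega\le 1-k/\tau$.

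In plus type, take $\tau=-(q-1)q^{m-1}$. Then
\[
1+\frac{(q-1)q^{m-1}(q^m-1)}{(q-1)q^{m-1}}=q^m .
\]

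In minus type, take $\tau=-q^{m-1}$. Then
\[
1+\frac{(q-1)q^{m-1}(q^m+1)}{q^{m-1}}=1+(q-1)(q^m+1)=q^{m+1}-q^m+q ,
\]
which equals $q\bigl((q-1)q^{m-1}+1\bigr)$.

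Delsarte's bound is stated for strongly regular (or, more generally, distance-regular) graphs, and $G$ is strongly regular for $m\ge2$ by Proposition~\ref{prop:local-parameters}. Alternatively, one can apply the Hoffman ratio bound to the complement $\overline G$, since $\omega(G)=\alpha(\overline G)$.

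\emph{Comparison.} We have $q^{m+1}-(q^{m+1}-q^m+q)=q^m-q$, which is positive since $m\ge2$. Hence the minus-type clique bound is strictly smaller than $\chi(G)=q^{m+1}$.

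The only place where care is needed is the algebraic simplification of $k-\mu$ and the correct assignment of sign to $\eps$ in each case. Everything else is routine.
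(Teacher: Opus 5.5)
Your proposal is correct and follows the paper's proof: both derive $\lambda-\mu=-\eps(q-2)q^{m-1}$ and $k-\mu=(q-1)q^{2m-2}$ from Proposition~\ref{prop:local-parameters} and take the nonprincipal eigenvalues as roots of the resulting quadratic (you confirm the tabulated pairs by checking their sum and product, where the paper computes the discriminant $q^{2m}$); both then apply $\omega\le 1-k/\tau$ and compare the result with $\chi(G)=q^{m+1}$. Your remark that the Delsarte bound requires strong regularity, not mere regularity as the paper's wording suggests, is a worthwhile precision, and it is satisfied because $G$ is strongly regular for $m\ge2$.
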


\begin{proof}
Since $G$ is a strongly regular graph with parameters $(v,k,\lambda,\mu)$ given in Proposition~\ref{prop:local-parameters}, its adjacency matrix has three distinct eigenvalues: $k$ (with multiplicity $1$) and two nonprincipal eigenvalues $r>0$ and $s<0$ satisfying
\[
r+s = \lambda-\mu,\qquad rs = \mu-k.
\]
Using the explicit formulas
\[
k=(q-1)q^{m-1}(q^m-\eps),\qquad
\mu=(q-1)q^{m-1}\bigl((q-1)q^{m-1}-\eps\bigr),\qquad
\lambda = \mu - \eps(q-2)q^{m-1},
\]
we obtain
\[
\lambda-\mu = -\eps(q-2)q^{m-1},
\]
$$
\mu-k = (q-1)q^{m-1}\bigl((q-1)q^{m-1}-\eps - (q^m-\eps)\bigr)= -(q-1)q^{2m-2}.
$$
Thus $r$ and $s$ are the roots of $x^2 + \eps(q-2)q^{m-1}x - (q-1)q^{2m-2}=0$.
The discriminant is
\[
\Delta = (q-2)^2q^{2m-2} + 4(q-1)q^{2m-2} = q^{2m},
\]
so
\[
r,s = \frac{-\eps(q-2)q^{m-1} \pm q^m}{2}
= \frac{q^{m-1}}{2}\bigl(-\eps(q-2)\pm q\bigr).
\]
Evaluating for $\eps=\pm 1$ gives the table in the statement.

The Delsarte--Hoffman bound(\cite[Section~3.3.2]{Delsarte}) for a $k$-regular graph states that the clique bound is $\omega(G) \le 1 - k/s$, where $s$ is the smallest eigenvalue.
\begin{itemize}
\item For $\eps=+1$: $k = (q-1)q^{m-1}(q^m-1)$ and $s = -(q-1)q^{m-1}$, hence
  $\omega(G) \le 1 + (q^m-1) = q^m$.
\item For $\eps=-1$: $k = (q-1)q^{m-1}(q^m+1)$ and $s = -q^{m-1}$, hence
  $\omega(G) \le 1 + (q-1)(q^m+1) = q^{m+1}-q^m+q$.
\end{itemize}
Finally, the minus-type bound satisfies
\[
\chi(G) - (q^{m+1}-q^m+q) = q^{m+1} - (q^{m+1}-q^m+q) = q^m-q > 0
\]
for all $q\ge 2$ and $m\ge 2$, so it is strictly smaller than $\chi(G)=q^{m+1}$.
\end{proof}

Recall
that an ovoid of a polar space is a set meeting each generator in exactly
one point. The plus-type equality case has a direct geometric interpretation, which we now state as a correspondence between Delsarte cliques and ovoids.

\begin{theorem}\label{thm:clique-ovoid}
Assume that $W$ has plus type.  For a subset $C\subseteq W$ put
\[
 \calQ_C=\{[p]\}\cup\{[v_x]:x\in C\},
\]
where $v_x$ is defined in \eqref{eq:affine-chart}.  Then $C$ is a clique of
$G$ of size $q^m$ if and only if $\calQ_C$ is an ovoid of
$Q^+(2m+1,q)$ containing $[p]$.  Thus the maximum-clique orbits meeting
\eqref{eq:plus-clique-bound} correspond to the point-stabilizer orbits on
ovoids containing $[p]$.
\end{theorem}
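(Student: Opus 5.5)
Here $V$ has dimension $2m+2$ and plus type, so it is the polar space $Q^+(2m+1,q)$, whose generators are totally singular subspaces of projective dimension $m$. An ovoid of this polar space has exactly $q^m+1$ points, and any two of its points are non-collinear. The plan is to prove both directions by counting, using the chart of Proposition~\ref{prop:chart} together with identity \eqref{eq:key-identity}.

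First I reformulate the clique condition. Two points of $\calQ_C$ of the form $[v_x],[v_y]$ satisfy $B(v_x,v_y)=Q_W(x-y)$, and $[p]$ is non-collinear with every $[v_x]$ because $B(p,v_x)=1$. Hence $C$ is a clique of $G$ exactly when the points of $\calQ_C$ are pairwise non-collinear in the polar space, that is, when $\calQ_C$ is a partial ovoid. Also $|\calQ_C|=|C|+1$, since $\iota$ is a bijection and $[p]\notin\iota(W)$.

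For the forward direction, suppose $C$ is a clique with $|C|=q^m$. Then $\calQ_C$ is a partial ovoid with $q^m+1$ points. A partial ovoid meets each generator in at most one point, so it suffices to show that a partial ovoid of this size meets every generator. I would count incidences: let $N$ be the number of generators, $g$ the number of generators through a point, and $P=(q^m+1)\prod_{i=0}^{m-1}(q^i+1)$ the number of points. Since each generator has $\theta_m=(q^{m+1}-1)/(q-1)$ points, we have $Ng\cdot$ (points per generator) $=P\cdot g$. Distinct points of a partial ovoid lie on disjoint sets of generators, so a partial ovoid of size $k$ meets exactly $kg$ generators. It remains to check the standard identity $(q^m+1)g=N$, i.e.\ $N/g=P/\theta_m=q^m+1$, which is a routine consequence of the known counts for $Q^+(2m+1,q)$. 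This shows every generator is met, so $\calQ_C$ is an ovoid.

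For the converse, suppose $\calQ_C$ is an ovoid containing $[p]$. Its $q^m+1$ points are pairwise non-collinear; two collinear points would span a totally singular line, which lies in some generator meeting the set twice. So by the reformulation above, $C$ is a clique, and $|C|=q^m$ because $\iota$ is injective.

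For the orbit statement, Theorem~\ref{thm:ambient-extension} identifies $\Aut G$ with the restriction of $\Aut(\calO(V,Q))_{[p]}$, and these automorphisms are induced by projective semisimilarities fixing $[p]$. Such maps preserve the collection of ovoids through $[p]$, and the map $C\mapsto\calQ_C$ is equivariant with respect to them. Therefore the orbits of cliques of size $q^m$ correspond to the orbits of $\Aut(\calO(V,Q))_{[p]}$ on ovoids through $[p]$. For $m=1$ the same follows from Proposition~\ref{prop:small-ambient-extension}, using the grid description of the polar space.

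The main obstacle is the forward direction: showing that a partial ovoid of size $q^m+1$ is a full ovoid. I would handle it by the double count above, verifying $N/g=q^m+1$ from the standard generator counts.
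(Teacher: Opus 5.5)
Your proof is correct, and for the forward direction it takes a different route from the paper.

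The paper bounds partial ovoids spectrally. For any point $[a]$ of a partial ovoid, the remaining points form a clique in $\Gamma_1(a)\cong G$ (by vertex-transitivity), so the Delsarte--Hoffman bound of Lemma~\ref{lem:clique-bounds} gives $|\mathcal P|\le q^m+1$. The paper then asserts that $\calQ_C$, which attains this maximum, ``therefore'' is an ovoid.

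You instead double count point--generator incidences. Non-collinear points share no generator, so $k$ points of a partial ovoid meet exactly $kg$ generators, and $N=(q^m+1)g$ forces every generator to be met. This buys several things:
\begin{itemize}
\item Your count is exactly what justifies the paper's ``therefore'': attaining the maximum size alone does not show that every generator is met.
\item It works uniformly for $m\ge1$, whereas Lemma~\ref{lem:clique-bounds} is stated only for $m\ge2$.
\item It reproves $k\le q^m+1$, and hence $\omega(G)\le q^m$, geometrically.
\end{itemize}
The paper's version, in exchange, ties the ovoid correspondence directly to the spectral (Delsarte) bound. Similarly, for the orbit statement at $m=1$ you invoke Proposition~\ref{prop:small-ambient-extension}. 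That covers a case the paper's citation of Theorem~\ref{thm:ambient-extension}, which assumes $m\ge2$, leaves implicit.

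One slip needs fixing. The product $(q^m+1)\prod_{i=0}^{m-1}(q^i+1)=\prod_{i=0}^{m}(q^i+1)$ is the number $N$ of generators, not the number of points. The point count is $P=(q^m+1)\theta_m$, and the double count should read $N\theta_m=Pg$. Take $g=\prod_{i=0}^{m-1}(q^i+1)$, the number of generators of $a^\perp/\langle a\rangle$, which is of type $Q^+(2m-1,q)$. The identity $N/g=q^m+1$ that your argument needs is then immediate.
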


\begin{proof}
Recall that vertices of $G=\Gamma_1(p)$ are the singular one-spaces $[v_x]=[f+x+Q_W(x)p]$ for $x\in W$, with adjacency given by $B(v_x,v_y)\neq0$, i.e.\ $Q_W(x-y)\neq0$, by Proposition~\ref{prop:chart}.
In the ambient orthogonal polar space $Q^+(2m+1,q)$ associated to $(V,Q)$, points are all singular one-spaces in $V$, two points $[u],[w]$ are \emph{collinear} iff $B(u,w)=0$ (equivalently, they lie on a common generator). Thus adjacency in $G$ means exactly that the two points are \emph{non-collinear} in the polar space, and every $[v_x]$ is non-collinear with $[p]$ because $B(p,v_x)=1\neq0$.

Let $C\subseteq W$ be a clique of $G$ of size $q^m$. Then $\mathcal{Q}_C=\{[p]\}\cup\{[v_x]:x\in C\}$ consists of $q^m+1$ singular points that are pairwise non‑collinear; such a set is a \emph{partial ovoid} of $Q^+(2m+1,q)$.
We claim that every partial ovoid of $Q^+(2m+1,q)$ has size at most $q^m+1$. To see this, let $\mathcal{P}$ be a partial ovoid. If $|\mathcal{P}|\le1$ the bound is trivial. Otherwise choose any $[a]\in\mathcal{P}$. All other points of $\mathcal{P}$ are non‑collinear with $[a]$, hence they lie in the first subconstituent $\Gamma_1(a)$. Moreover they are pairwise non‑collinear, so they form a clique in $\Gamma_1(a)$. Since the orthogonal graph is vertex‑transitive, $\Gamma_1(a)\cong G$, and Lemma~\ref{lem:clique-bounds} gives $\omega(G)\le q^m$ for plus type. Therefore $|\mathcal{P}|-1\le q^m$, i.e.\ $|\mathcal{P}|\le q^m+1$.
Applying this bound to $\mathcal{Q}_C$, which has exactly $q^m+1$ points, we conclude that $\mathcal{Q}_C$ attains the maximum size and therefore is an ovoid of $Q^+(2m+1,q)$ containing $[p]$.

Conversely, assume $\mathcal{O}$ is an ovoid of $Q^+(2m+1,q)$ with $[p]\in\mathcal{O}$. Then $\mathcal{O}$ is a set of $q^m+1$ singular points, pairwise non-collinear. Deleting $[p]$ leaves $q^m$ points, all of which are non-collinear with $[p]$, so they lie in the neighbourhood of $[p]$ and can be written as $[v_x]$ for unique $x\in W$ via the affine chart \eqref{eq:affine-chart}. For any two such points $[v_x],[v_y]$ with $x\neq y$, the ovoid property gives $B(v_x,v_y)\neq0$, so $Q_W(x-y)\neq0$ by~\eqref{eq:key-identity}. Thus $C:=\{x\in W: [v_x]\in\mathcal{O}\setminus\{[p]\}\}$ is a clique of $G$ of size $q^m$.

The final assertion on orbits follows because every automorphism of $G$ extends to an automorphism of $\mathcal{O}(V,Q)$ fixing $[p]$ (Theorem~\ref{thm:ambient-extension}), and therefore induces an automorphism of the polar space $Q^+(2m+1,q)$ stabilizing $[p]$ and mapping ovoids containing $[p]$ to ovoids containing $[p]$. Hence the two orbit structures correspond.
\end{proof}

The known existence and nonexistence results for ovoids now yield a sharp rank threshold for plus-type Delsarte cliques in even characteristic.

\begin{corollary}\label{cor:plus-clique-threshold}
Let $q$ be even and $\eps=+1$.  Then
\[
 \omega(G)=q^m\quad\text{for }1\le m\le3,
 \qquad
 \omega(G)<q^m\quad\text{for }m\ge4.
\]
\end{corollary}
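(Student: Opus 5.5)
The plan is to combine Theorem~\ref{thm:clique-ovoid} with the classical existence and nonexistence theory of ovoids of hyperbolic quadrics in even characteristic. By Lemma~\ref{lem:clique-bounds} (for $m\ge2$), and by direct inspection for $m=1$, we have $\omega(G)\le q^m$. Theorem~\ref{thm:clique-ovoid} then says that equality $\omega(G)=q^m$ holds exactly when $Q^+(2m+1,q)$ has an ovoid through $[p]$. Since the isometry group of $(V,Q)$ is transitive on singular points, an ovoid exists through $[p]$ as soon as any ovoid exists. So the statement reduces to the following: $Q^+(2m+1,q)$ with $q$ even has an ovoid for $m\le3$ and has none for $m\ge4$.

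For $m=1$ no ovoid theory is needed. By \eqref{eq:grid-adjacency}, $G$ is the complement of $H(2,q)$, and the diagonal $\{(s,s)\}$, or indeed any permutation graph $\{(s,\pi(s))\}$, is a clique of size $q$. It is also worth noting that the ovoid bound from Theorem~\ref{thm:clique-ovoid} needs $m\ge2$ through Lemma~\ref{lem:clique-bounds}; for $m=1$ the bound $\omega\le q$ follows because a clique has distinct first coordinates.

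For $m=2$ I would exhibit ovoids of $Q^+(5,q)$ via the Klein correspondence: an ovoid of $Q^+(5,q)$ is the image of a spread of $\PG(3,q)$, for instance a regular spread. For $m=3$ I would use that $Q^+(7,q)$ has ovoids for $q$ even. These come from triality applied to spreads of $Q^+(7,q)$, or more concretely from the fact that $Q(6,q)$, $q$ even, is isomorphic to $W(5,q)$ by projecting from the nucleus, which has spreads. Explicitly, an ovoid of $Q(6,q)$ for $q$ even (for example the one arising from a spread of $W(5,q)$ under the duality $Q(6,q)\cong W(5,q)$ when $q$ is even) embedded in a nondegenerate hyperplane section $Q(6,q)\subset Q^+(7,q)$ is an ovoid of $Q^+(7,q)$. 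Alternatively, one can cite Kantor's or Thas's lists of ovoids of $Q^+(7,q)$ for $q$ even. In each case we take the ovoid, translate it to contain $[p]$, and read off a clique of size $q^m$.

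For $m\ge4$ I would invoke the theorem of Thas that $Q^+(2n+1,q)$ has no ovoids when $n\ge4$ and $q$ is even. The argument there is a projection/counting argument through $Q(2n,q)\cong W(2n-1,q)$ with $q$ even, together with the fact that $Q(8,q)$ has no ovoids; see also the references \cite{BambergEtAlSeparating,BambergEtAlTactical}. Given nonexistence, Theorem~\ref{thm:clique-ovoid} shows no clique attains $q^m$, so $\omega(G)<q^m$. The main obstacle is purely referential. It is to cite correctly the exact even-characteristic ovoid results, namely existence for $Q^+(7,q)$ for all even $q$ and nonexistence for $Q^+(9,q)$ and beyond. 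Everything else is the bijection of Theorem~\ref{thm:clique-ovoid} plus transitivity.
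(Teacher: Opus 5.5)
Your overall reduction matches the paper's: bound $\omega(G)\le q^m$, use Theorem~\ref{thm:clique-ovoid} to turn equality into the existence of an ovoid of $Q^+(2m+1,q)$, and quote ovoid theory. Two of your additions improve on the paper. Requiring the ovoid to pass through $[p]$ costs nothing, by transitivity on singular points, which the paper leaves implicit. Your direct treatment of $m=1$, where Lemma~\ref{lem:clique-bounds} is unavailable, is also better.

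Two of the geometric inputs you supply are wrong, however. For $m=3$ the ``explicit'' construction is impossible. For even $q$, $Q(6,q)\cong W(5,q)$ is an isomorphism of polar spaces, not a duality. $W(5,q)$ has no ovoids, so neither does $Q(6,q)$. What $W(5,q)$ provides is a spread, and the correct chain is: spread of $W(5,q)$ $\to$ spread of $Q(6,q)$ $\to$ spread of $Q^+(7,q)$ $\to$ ovoid of $Q^+(7,q)$ by triality. The third step takes the solids of one family through the spread planes; these are pairwise disjoint. Alternatively, simply cite the known ovoids of $Q^+(7,q)$ for even $q$, as the paper does with Kantor's.

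For $m\ge4$ the step fails as you justify it, and this is the substantive half of the corollary. Nonexistence of ovoids in $Q(8,q)$ does not by itself say anything about $Q^+(9,q)$. The hyperplane relations only go from parabolic to hyperbolic: an ovoid of $Q(2n,q)$ is an ovoid of $Q^+(2n+1,q)$, and so is the trace of an ovoid of $Q(2n+2,q)$ on a hyperbolic hyperplane. The reverse implication is false already for $n=3$: $Q(6,q)$ has no ovoids for even $q$, while $Q^+(7,q)$ does. So any ``projection/counting'' argument of the shape you describe would need something special to $n\ge4$, and you do not supply it.

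For general even $q$ the needed result is not a Thas hyperplane argument. It is the Blokhuis--Moorhouse $p$-rank theorem that the paper cites: $Q^+(2n+1,p^e)$ has no ovoid whenever $p^n>\binom{2n+p}{2n+1}-\binom{2n+p-2}{2n+1}$. For $p=2$ this reads $2^n>2n+2$, which holds for every $n\ge4$. Alternatively, use it only for $Q^+(9,2^e)$ and propagate nonexistence upward. For a singular point $z$ not on an ovoid $\mathcal O$ of $Q^+(2n+3,q)$, the points of $\mathcal O\cap z^\perp$ project injectively to an ovoid of $z^\perp/z\cong Q^+(2n+1,q)$. With these two inputs repaired, your proof coincides with the paper's.
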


\begin{proof}
The quadrics $Q^+(3,q)$ and $Q^+(5,q)$ have ovoids; in the latter case this
is equivalent under the Klein correspondence to a spread of $\PG(3,q)$.
The quadric $Q^+(7,q)$ has Kantor ovoids for every even $q$.  For
$Q^+(2m+1,q)$ in characteristic two and $m\ge4$, the $p$-rank obstruction
of Blokhuis and Moorhouse rules out ovoids
\cite{BlokhuisMoorhouse,BambergEtAlTactical}.  Apply
Theorem~\ref{thm:clique-ovoid}.
\end{proof}

\begin{remark}\label{rem:ovoid-classification}
Theorem~\ref{thm:clique-ovoid} is a classification reduction, not a claim
that all ovoids are classified.  Already for $Q^+(7,q)$ the full
classification is open; recent work classifies substantial low-degree
families and leaves explicit residual problems \cite{BartoliEtAl}.  Thus
even where the clique number is known, classifying all maximum cliques can
remain a difficult finite-geometric problem.
\end{remark}

\section{Exact maximum cliques in the binary infinite families}
\label{sec:binary-cliques}

Corollary~\ref{cor:plus-clique-threshold} decides whether the spectral bound
is attained, but it does not usually give the exact clique number below that
bound.  Over $\F_2$ the missing value can be determined in every dimension
by a small Gram-matrix argument.  This treats both orthogonal types at once.

For $n\ge1$, let $E_n=\F_2^n$ with standard basis
$e_1,\ldots,e_n$, and define the \emph{binary simplex form}
\begin{equation}\label{eq:simplex-form}
 \psi_n(a_1,\ldots,a_n)
 =\sum_{i=1}^n a_i+\sum_{1\le i<j\le n}a_i a_j.
\end{equation}
Thus $\psi_n(e_i)=1$ and the polar product of two distinct basis vectors is
one.  Put
\begin{equation}\label{eq:eta-definition}
 \eta_m=(-1)^{m(m+1)/2}.
\end{equation}
To determine the exact clique numbers over $\mathbb{F}_2$ we will use the binary simplex form, whose basic properties are collected in the following lemma.
\begin{lemma}\label{lem:binary-simplex}
Let $b_n$ be the polar form of $\psi_n$, and let
$\boldsymbol{1}=e_1+\cdots+e_n$.\\
$(i)$ The Gram matrix of $b_n$ is $I_n+J_n$, where $I_n$ is the identity matrix and $J_n$ is the all-ones matrix. It is nonsingular for even
$n$, while $\rad(b_n)=\langle\boldsymbol{1}\rangle$ for odd $n$.
Moreover,
\[
 \psi_n(\boldsymbol{1})\equiv\frac{n(n+1)}2\pmod2.
\]
$(ii)$ The nondegenerate form $\psi_{2m}$ has type $\eta_m$.\\
$(iii)$ If $m$ is odd, then $\psi_{2m+1}$ descends to a nondegenerate form
of type $\eta_m$ on
$E_{2m+1}/\langle\boldsymbol{1}\rangle$.\\
$(iv)$ If $m$ is even, then $\psi_{2m-1}$ descends to a nondegenerate form
of type $\eta_m$ on
$E_{2m-1}/\langle\boldsymbol{1}\rangle$.

\end{lemma}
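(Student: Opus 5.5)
Part $(i)$ is a direct computation. For distinct $i,j$ we have $b_n(e_i,e_j)=\psi_n(e_i+e_j)-\psi_n(e_i)-\psi_n(e_j)=(1+1+1)-1-1=1$, and $b_n(e_i,e_i)=0$ because $b_n$ is alternating. Hence the Gram matrix is $J_n+I_n$ over $\F_2$.

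Since $J_n^2=nJ_n$, the matrix $J_n+I_n$ is an involution when $n$ is even, and therefore nonsingular. When $n$ is odd we have $(J_n+I_n)v=(\sum_i v_i)\boldsymbol{1}+v$. This vanishes exactly when $v=c\boldsymbol{1}$ with $c=\sum_i v_i=nc=c$, so the radical is $\langle\boldsymbol{1}\rangle$.

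Finally, $\psi_n(\boldsymbol{1})=n+\binom n2=n(n+1)/2$.

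For $(ii)$ I would argue by induction on $m$, splitting off a hyperbolic or anisotropic plane. Put $P=\langle e_{2m-1},e_{2m}\rangle$. Every vector of $P$ has $\psi$-value $1$, since $\psi(e_{2m-1}+e_{2m})=1+1+1=1$. So $P$ is an anisotropic plane, of minus type.

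Its orthogonal complement is spanned by $e_i'=e_i+e_{2m-1}+e_{2m}$ for $i\le 2m-2$. One checks $\psi(e_i')=1+1+1+(1+1+1)=0$, so these vectors are singular. For $i\ne j$, $b(e_i',e_j')=1$, because expanding gives nine terms: the pairs among distinct indices contribute $1$ each, and $b(e_{2m-1},e_{2m-1})=b(e_{2m},e_{2m})=0$.

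So $P^\perp$ carries a form with Gram matrix $I+J$ but $\psi$ vanishing on the basis. This is not $\psi_{2m-2}$, so I would switch to a cleaner route: an Arf-invariant computation.

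Over $\F_2$ the type of a nondegenerate form equals $(-1)^{\mathrm{Arf}}$. The Arf invariant is additive on orthogonal sums, and a form is of minus type exactly when more than half of its vectors are nonsingular. Counting is easiest. The value $\psi_n(a)$ depends only on the weight $w$ of $a$, and equals $w+\binom w2=\binom{w+1}2 \bmod 2$. This is $1$ exactly when $w\equiv1,2\pmod 4$.

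The number of singular vectors is then $N_0=\sum_{w\equiv0,3\ (4)}\binom nw$. Standard roots-of-unity filtering expresses this through $\mathrm{Re}$ and $\mathrm{Im}$ of $(1+i)^n=2^{n/2}e^{i\pi n/4}$. For $n=2m$ one gets $N_0=2^{2m-1}+\eta_m 2^{m-1}$, which is exactly the singular count $2^{2m-1}+\eps 2^{m-1}$ for type $\eps=\eta_m$. This sign bookkeeping modulo $8$ is the step I expect to be the main obstacle, and I would verify it on $m=1,2$: $\psi_2$ is anisotropic, and $\eta_1=-1$.

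For $(iii)$ and $(iv)$, with $n$ odd the form descends to $E_n/\langle\boldsymbol{1}\rangle$ precisely when $\psi_n(\boldsymbol{1})=0$, i.e.\ $n\equiv 3\pmod 4$. This holds for $n=2m+1$ with $m$ odd, and for $n=2m-1$ with $m$ even. The induced form is nondegenerate by $(i)$, because its polar form has radical $\rad(b_n)/\langle\boldsymbol{1}\rangle=0$.

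Its type follows from counting again. The number of singular vectors in the quotient is $N_0(n)/2$, since each fibre $\{a,a+\boldsymbol{1}\}$ has constant $\psi$-value. Applying the same filtering to odd $n$ gives $N_0(n)=2^{n-1}+\eta\,2^{(n-1)/2}$ with the appropriate sign. Halving this and comparing with $2^{2r-1}+\eps 2^{r-1}$, for quotient dimension $2r=n-1$, yields type $\eta_m$ in both cases; here $r=m$ in $(iii)$ and $r=m-1$ in $(iv)$. Again the only delicate point is tracking $\cos(n\pi/4)$ and $\sin(n\pi/4)$ modulo $8$. I would check it on $n=3$ ($m=1$, a quotient of dimension $2$ with one singular nonzero vector... giving type $-1=\eta_1$) and on $n=3$ for $(iv)$ with $m=2$ ($\eta_2=-1$).
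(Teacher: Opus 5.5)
Your proposal is correct and follows essentially the paper's route: part $(i)$ by direct computation, and parts $(ii)$--$(iv)$ via the weight formula $\psi_n(a)\equiv\binom{w+1}{2}\pmod 2$, a character sum evaluated through $(1+i)^n$, and halving over the fibres $\{a,a+\boldsymbol{1}\}$ for the quotients. Your singular-vector count $N_0=(2^n+S_n)/2$ is just a repackaging of the paper's Gauss sum $S_n$. One small slip is in your $n=3$ sanity check: the two-dimensional quotient has no nonzero singular vector at all. Its only singular vector is $0$, matching $N_0(3)/2=1$, and that is precisely why it is anisotropic of type $-1$.
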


\begin{proof}
\noindent
$(i)$ For $i \neq j$, we have $b_n(e_i, e_j) = \psi_n(e_i+e_j) - \psi_n(e_i) - \psi_n(e_j)$.
Since $e_i+e_j$ has weight $2$, $\psi_n(e_i+e_j) = 2 + \binom{2}{2} = 3 \equiv 1 \pmod 2$.
Also $\psi_n(e_i)=1$. Hence $b_n(e_i, e_j) = 1 - 1 - 1 = 1$ in $\mathbb{F}_2$.
Clearly $b_n(e_i, e_i)=0$. Thus the Gram matrix of $b_n$ in the standard basis is $I_n + J_n$.
Its determinant is $1+n$. In $\mathbb{F}_2$, if $n$ is even then $1+n=1$ and $b_n$ is nonsingular.
If $n$ is odd then $1+n=0$ and $b_n$ is degenerate. For odd $n$, $(I_n+J_n)\boldsymbol{1} = \boldsymbol{1} + n\boldsymbol{1} = \boldsymbol{1}+\boldsymbol{1}=0$, so $\boldsymbol{1}\in\ker (I_n+J_n).$ Now $J_n$ has eigenvalues $n$ (with eigenvector $\boldsymbol{1}$) and $0$ (with multiplicity $n-1$).  Consequently $I_n+J_n$ has eigenvalues $1+n$ on $\boldsymbol{1}$ and $1$ on the orthogonal complement $\{\boldsymbol{1}\}^{\perp} = \{x \mid \sum x_i = 0\}$.  
For odd $n$, $1+n=0$, so $I_n+J_n$ has eigenvalue $0$ with multiplicity $1$ and eigenvalue $1$ with multiplicity $n-1$.  Hence $\dim\ker (I_n+J_n) = 1$ and we must have $\ker (I_n+J_n) = \langle\boldsymbol{1}\rangle$.  Since $\rad(b_n)$ equals the kernel of the Gram matrix $I_n+J_n$, we conclude $\rad(b_n) = \langle\boldsymbol{1}\rangle$.

The weight of $\boldsymbol{1}$ is $n$, hence $\psi_n(\boldsymbol{1}) = n + \binom{n}{2} \equiv \frac{n(n+1)}{2} \pmod 2$.

\medskip
\noindent
$(ii)$ For a vector $a \in E_n$ of Hamming weight $w$, its support has size $w$.
Then $\psi_n(a) = w + \binom{w}{2} = \frac{w(w+1)}{2} \pmod 2$.
Consequently the quadratic Gauss sum is
\[
S_n = \sum_{a \in E_n} (-1)^{\psi_n(a)}
= \sum_{w=0}^n \binom{n}{w} (-1)^{w(w+1)/2}.
\]
The sequence $c_w = (-1)^{w(w+1)/2}$ is $1, -1, -1, 1$ periodic with period $4$.
Using the periodic sequence $(-1)^{w(w+1)/2}$ one can write it as a linear combination of $i^w$ and $(-i)^w$:
\[
(-1)^{w(w+1)/2} = \frac{1+i}{2}\, i^w + \frac{1-i}{2}\, (-i)^w .
\]
Substituting this into the Gauss sum gives
\[
S_n = \sum_{w=0}^n \binom{n}{w}\Bigl( \frac{1+i}{2}\, i^w + \frac{1-i}{2}\, (-i)^w \Bigr)
     = \frac{1+i}{2}\,(1+i)^n + \frac{1-i}{2}\,(1-i)^n .
\]
For any complex number $z$, $\frac{1+i}{2}z + \frac{1-i}{2}\bar{z} = \operatorname{Re}(z) - \operatorname{Im}(z)$.
Taking $z=(1+i)^n$ yields the closed form
\[
S_n = \operatorname{Re}(1+i)^n - \operatorname{Im}(1+i)^n .
\]
For $n=2m$, $(1+i)^{2m} = (2i)^m = 2^m i^m$.
Thus $S_{2m} = 2^m(\cos\frac{m\pi}{2} - \sin\frac{m\pi}{2})$.
Evaluating modulo $4$:
\[
S_{2m} = \begin{cases}
2^m, & m \equiv 0 \pmod 4,\\
-2^m, & m \equiv 1,2 \pmod 4,\\
2^m, & m \equiv 3 \pmod 4.
\end{cases}
\]
This is exactly $S_{2m} = \eta_m 2^m$ with $\eta_m = (-1)^{m(m+1)/2}$.
For a nondegenerate binary quadratic form $R$ on a $2d$-dimensional space, the type sign is $2^{-d}\sum_x (-1)^{R(x)}$.
Here $d=m$, so the sign is $2^{-m} S_{2m} = \eta_m$. Hence $\psi_{2m}$ has type $\eta_m$.

\medskip
\noindent
$(iii)$ and $(iv)$ For odd $n$, $\rad(b_n) = \langle\boldsymbol{1}\rangle$.
If in addition $\psi_n(\boldsymbol{1})=0$, then for every $x\in E_n$ and $y\in\{0,\boldsymbol{1}\}$ we have
\[
\psi_n(x+\boldsymbol{1}) = \psi_n(x) + \psi_n(\boldsymbol{1}) + b_n(x,\boldsymbol{1}) = \psi_n(x),
\]
because $b_n(x,\boldsymbol{1})=0$ by definition of the radical.  
Thus $\psi_n$ is constant on the cosets of $\langle\boldsymbol{1}\rangle$, and we can define a quadratic form $\overline{\psi}_n$ on the quotient space $\overline{E}_n = E_n/\langle\boldsymbol{1}\rangle$ by
$\overline{\psi}_n(x+\langle\boldsymbol{1}\rangle) = \psi_n(x)$.  
The polar form of $\overline{\psi}_n$ is $\overline{b}_n(\bar x,\bar y)=b_n(x,y)$, which is well‑defined and nondegenerate because $\langle\boldsymbol{1}\rangle$ is the full radical of $b_n$.  
Hence $(\overline{E}_n,\overline{\psi}_n)$ is a nondegenerate quadratic space of dimension $n-1$.
For $n=2m+1$, $\psi_{2m+1}(\boldsymbol{1}) = \frac{(2m+1)(2m+2)}{2} \equiv (m+1) \pmod 2$, which is $0$ exactly when $m$ is odd.
For $n=2m-1$, $\psi_{2m-1}(\boldsymbol{1}) = \frac{(2m-1)(2m)}{2} \equiv m \pmod 2$, which is $0$ exactly when $m$ is even.
In both valid cases the vector $\boldsymbol{1}$ is singular (its quadratic value vanishes), and the elements of each coset $\{x, x+\boldsymbol{1}\}$ have the same quadratic value.  
Therefore the quadratic Gauss sum of the original space splits as
\[
S_n = \sum_{x\in E_n}(-1)^{\psi_n(x)} = 2\sum_{\bar x\in\overline{E}_n}(-1)^{\overline{\psi}_n(\bar x)} .
\] 
The type of the nondegenerate form $\overline{\psi}_n$ on a $2d$-dimensional binary space is given by the normalised Gauss sum $2^{-d}\sum_{\bar x}(-1)^{\overline{\psi}_n(\bar x)}$.

For $n=2m+1$ (with $m$ odd), the quotient space has dimension $2m$, so $d=m$.
From the closed form $S_n = \operatorname{Re}(1+i)^n - \operatorname{Im}(1+i)^n$ we compute $S_{2m+1}$.  
Using $(1+i)^{2m+1}=2^m i^m(1+i)$ and $(1-i)^{2m+1}=2^m(-1)^m i^m(1-i)$,
\begin{align*}
S_{2m+1} &= \frac{1+i}{2}\,2^m i^m(1+i) + \frac{1-i}{2}\,2^m(-1)^m i^m(1-i)\\
&= 2^{m-1}i^m\bigl[(1+i)^2 + (-1)^m(1-i)^2\bigr]\\
&= 2^{m-1}i^m\bigl[2i + (-1)^m(-2i)\bigr]\\
&= 2^{m}i^{m+1}\bigl[1-(-1)^m\bigr] .
\end{align*}
Since $m$ is odd, $(-1)^m=-1$, so $S_{2m+1}=2^{m+1}i^{m+1}$.
Hence $\frac{1}{2}S_{2m+1}=2^{m}i^{m+1}$, and the type is
\[
2^{-m}\cdot\frac{1}{2}S_{2m+1}=i^{m+1}=(-1)^{(m+1)/2}=\eta_m .
\]

For $n=2m-1$ (with $m$ even), the quotient dimension is $2m-2=2(m-1)$, so $d=m-1$.
Compute $(1+i)^{2m-1}=2^{m-1}i^m(1-i)$ and $(1-i)^{2m-1}=2^{m-1}(-1)^m i^m(1+i)$.
Then
\begin{align*}
S_{2m-1} &= \frac{1+i}{2}\,2^{m-1}i^m(1-i) + \frac{1-i}{2}\,2^{m-1}(-1)^m i^m(1+i)\\
&= 2^{m-2}i^m\bigl[(1+i)(1-i) + (-1)^m(1-i)(1+i)\bigr]\\
&= 2^{m-2}i^m\cdot 2\bigl[1+(-1)^m\bigr] .
\end{align*}
Since $m$ is even, $(-1)^m=1$, thus $S_{2m-1}=2^{m}i^m$.
Consequently $\frac{1}{2}S_{2m-1}=2^{m-1}i^m$, and the type is
\[
2^{-(m-1)}\cdot\frac{1}{2}S_{2m-1}=i^m=(-1)^{m/2}=\eta_m .
\]
This completes the proof of (iii) and (iv).
\end{proof}
The exact value of the clique number of $G$ over $\mathbb{F}_2$ is given by the following theorem.
\begin{theorem}\label{thm:binary-clique-number}
Let $q=2$, let $W$ have dimension $2m$ and type
$\eps\in\{+1,-1\}$, and let
$G=\overline{VO_{2m}^{\eps}(2)}$.  Then
\begin{equation}\label{eq:binary-clique-formula}
 \omega(G)=
 \begin{cases}
  2m+2,&\eps=\eta_m\text{ and }m\text{ is odd},\\
  2m+1,&\eps=\eta_m\text{ and }m\text{ is even},\\
  2m,&\eps=-\eta_m.
 \end{cases}
\end{equation}
All maximum cliques lie in a single orbit of
$W\rtimes\Orth(W,Q_W)\leq\Aut(G)$.
\end{theorem}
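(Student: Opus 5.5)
Translate a clique so that it contains $0$, and encode its nonzero elements as the image of a form-preserving linear map from $(E_k,\psi_k)$ into $W$. The clique question then becomes a question about which quotients of the binary simplex spaces embed isometrically in $W$, and Lemma~\ref{lem:binary-simplex} decides that. Witt's extension theorem gives the single orbit.

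\textbf{Step 1: the reduction.} Translations are automorphisms, so it suffices to study cliques $C$ with $0\in C$. Write $C=\{0,c_1,\ldots,c_k\}$. Adjacency over $\F_2$ gives $Q_W(c_i)=1$ and $Q_W(c_i+c_j)=1$, hence $B(c_i,c_j)=1$ for $i\ne j$. Consequently the linear map
\[
\varphi\colon E_k\to W,\qquad e_i\mapsto c_i,
\]
satisfies $Q_W(\varphi(a))=\psi_k(a)$ for all $a\in E_k$. Conversely, any linear map with $Q_W\circ\varphi=\psi_k$ has $\varphi(e_i)$ nonzero and pairwise distinct, since $\psi_k(e_i)=\psi_k(e_i+e_j)=1$. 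Its image $\{0,\varphi(e_1),\ldots,\varphi(e_k)\}$ is then a clique of size $k+1$.

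The kernel $K=\ker\varphi$ lies in $\rad(b_k)$ and is totally singular. By Lemma~\ref{lem:binary-simplex}(i) this leaves two possibilities:
\begin{itemize}
\item[(a)] $K=0$;
\item[(b)] $K=\langle\boldsymbol 1\rangle$, which requires $k$ odd and $\psi_k(\boldsymbol 1)=0$.
\end{itemize}
In either case $\varphi$ induces an isometric embedding of $(E_k/K,\psi_k)$ into $(W,Q_W)$. So $\omega(G)-1$ is the largest $k$ for which such an embedding exists.

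\textbf{Step 2: upper bounds.}
\begin{itemize}
\item[(1)] $k\ge 2m+2$ is impossible. For $k$ even the form $\psi_k$ is nondegenerate and $K=0$, so $k\le 2m$. For $k$ odd, (a) gives a $k$-dimensional subspace with one-dimensional radical, so $k\le 2m-1$; (b) gives a nondegenerate image of dimension $k-1\le 2m$.
\item[(2)] $k=2m+1$ forces case (b), so $\psi_{2m+1}(\boldsymbol 1)=0$, i.e.\ $m$ odd. The image is then all of $W$, and Lemma~\ref{lem:binary-simplex}(iii) gives $\eps=\eta_m$.
\item[(3)] $k=2m$ forces $\varphi$ to be an isometry onto $W$, so $\eps=\eta_m$ by Lemma~\ref{lem:binary-simplex}(ii).
\end{itemize}
Hence for $\eps=-\eta_m$ we get $k\le 2m-1$. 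For $\eps=\eta_m$ with $m$ even we get $k\le 2m$.

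\textbf{Step 3: matching constructions.} When $\eps=\eta_m$ the equality cases are realised directly. The isometries $E_{2m+1}/\langle\boldsymbol 1\rangle\cong W$ (for $m$ odd) and $E_{2m}\cong W$ exist because the types agree.

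For $\eps=-\eta_m$, take $k=2m-1$. The form $\psi_{2m-1}$ restricted to $\langle e_1,\ldots,e_{2m-2}\rangle$ is $\psi_{2m-2}$, which is nondegenerate. Embed it isometrically as $U\le W$; this is possible because a $(2m-2)$-space of either type embeds in a $2m$-space of either type, with complement a plane. Then $W=U\perp U^\perp$. For the image of $e_{2m-1}$, take $c=u+z$ with $u\in U$ and $z\in U^\perp$:
\begin{itemize}
\item choose $u\in U$ with $B(u,c_i)=1$ for all $i\le 2m-2$, possible by nondegeneracy of $U$;
\item choose $z\in U^\perp$ with $Q_W(z)=1+Q_W(u)$, possible because every nondegenerate binary plane represents both values $0$ and $1$.
\end{itemize}
This yields $\omega(G)=2m$ in that case and establishes \eqref{eq:binary-clique-formula}.

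\textbf{Step 4: a single orbit.} Any permutation of the coordinates preserves $\psi_k$, so relabelling the elements of a clique is harmless. The main obstacle is to show that the kernel type (a) or (b) is forced by $(m,\eps)$ for maximum cliques; otherwise two orbits could occur.
\begin{itemize}
\item For $k=2m+1$ or $k=2m$ this was shown in Step 2.
\item For $\eps=-\eta_m$ and $m$ odd, $\psi_{2m-1}(\boldsymbol 1)=1$ by Lemma~\ref{lem:binary-simplex}(i), so only (a) is possible.
\item For $\eps=-\eta_m$ and $m$ even, I expect to exclude (a). Suppose $K=0$. The image $\varphi(E_{2m-1})$ has radical spanned by $r=\varphi(\boldsymbol 1)$, and $Q_W(r)=\psi_{2m-1}(\boldsymbol 1)=0$. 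Choose a complement $U$ of $\langle r\rangle$ inside the image. Then $U$ is nondegenerate, isometric to $E_{2m-1}/\langle\boldsymbol 1\rangle$, and of type $\eta_m$ by Lemma~\ref{lem:binary-simplex}(iv). So $U^\perp$ is a plane of type $\eps\eta_m=-1$, which is anisotropic. But $U^\perp$ contains the nonzero singular vector $r$, a contradiction.
\end{itemize}
In every case the maximum cliques through $0$ therefore correspond to isometric embeddings of one fixed quadratic space $E_k/K$ into $W$. Witt's extension theorem for quadratic forms on a nondegenerate space in characteristic two (valid for arbitrary, possibly degenerate, subspaces) then gives an element of $\Orth(W,Q_W)$ carrying one clique to the other. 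Together with translations, $W\rtimes\Orth(W,Q_W)$ is transitive on maximum cliques.
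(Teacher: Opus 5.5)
Your proof is correct. The reduction to a form-preserving map $\varphi\colon(E_k,\psi_k)\to(W,Q_W)$ with $\ker\varphi\subseteq\rad(b_k)$ matches the paper's argument. So do the dimension and parity bounds, and the treatment of the two equality cases $k=2m+1$ and $k=2m$ through Lemma~\ref{lem:binary-simplex}(ii),(iii). You depart from the paper in two places.

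\emph{The construction for $\eps=-\eta_m$.} The paper builds a clique of size $2m$ separately for each parity of $m$.
\begin{itemize}
\item For $m$ even, it descends $\psi_{2m-1}$ to $E_{2m-1}/\langle\mathbf{1}\rangle$ and embeds the quotient with an anisotropic complement.
\item For $m$ odd, it constructs an explicit extension $E_{2m-1}\oplus\langle z\rangle$ and identifies its type by a Gauss-sum computation.
\end{itemize}
Your one-vector extension embeds $\psi_{2m-2}$ anywhere in $W$, then takes $c=u+z$ with $B(u,c_i)=1$ for all $i$ and $z\in U^\perp$ satisfying $Q_W(z)=1+Q_W(u)$. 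It needs no type computation and gives $\omega(G)\ge 2m$ uniformly in $m$ and $\eps$. The cost is that it hides the relation the resulting clique satisfies. For $m$ even, $U^\perp$ turns out anisotropic and $Q_W(u)=1$, so necessarily $z=0$ and $c=\sum_{i\le 2m-2}c_i$. The paper's quotient model shows that relation directly.

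\emph{The orbit claim.} The paper only lists the linear relation that a maximum clique satisfies in each case. In particular, it asserts $\sum x_i=0$ for $m$ even and $\eps=-\eta_m$ without proof. It then extends the partial isometry by matching orthogonal complements. That step is not well posed in the $m$ odd subcase, where $U_1^\perp=\rad U_1\subseteq U_1$.

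Your argument excluding an injective $\varphi$ when $m$ is even supplies exactly the missing step: the anisotropic plane $U^\perp$ would contain the nonzero singular vector $\varphi(\mathbf{1})$. Your single appeal to Witt's theorem, for arbitrary, possibly degenerate, subspaces of the nondegenerate space $W$, covers all cases at once. On the single-orbit statement your version is therefore the more complete of the two.
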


\begin{proof}
Since $q=2$, the field automorphism group is trivial and every semisimilarity is a similarity with multiplier $1$; hence $\GammaO(W,Q_W)=\Orth(W,Q_W)$. The affine group $W\rtimes\Orth(W,Q_W)$ acts on $G$ by $x\mapsto Ax+b$ and is contained in $\Aut(G)$ by Theorem~\ref{thm:local-aut}. Therefore, the orbit statement makes sense.

Let $C$ be a clique of $G$. Translating by an element of $W$ we may assume $0\in C$. Write $C\setminus\{0\}=\{x_1,\dots,x_n\}$. The adjacency condition in $G=\overline{VO(W,Q_W)}$ is $Q_W(x-y)\neq0$ for $x\neq y$. Thus for $i\neq j$, $Q_W(x_i-x_j)\neq0$, i.e.\ $Q_W(x_i-x_j)=1$. Since every nonzero difference of
two vertices of $C$ is nonsingular, we obtain
$$
Q_W(x_i)=Q_W(x_i-0)=1.
$$
Moreover,
\[
B_W(x_i,x_j)=Q_W(x_i+x_j)-Q_W(x_i)-Q_W(x_j)=1-1-1=1\qquad(i\neq j)
\]
because $x_i+x_j = x_i-x_j$ in characteristic $2$. Hence
\begin{equation}\label{eq:binary-clique-gram}
 Q_W(x_i)=1,\qquad B_W(x_i,x_j)=1\quad(i\ne j).
\end{equation}

Define the linear map $\phi\colon E_n=\mathbb{F}_2^n\to W$ by $\phi(e_i)=x_i$. By \eqref{eq:binary-clique-gram}, for any $a=\sum a_ie_i$,
\[
\begin{aligned}
Q_W(\phi(a))
&= Q_W\Bigl(\sum_{i=1}^n a_i x_i\Bigr) \\
&= \sum_{i=1}^n a_i^2 Q_W(x_i) + \sum_{1\le i<j\le n} a_i a_j B_W(x_i,x_j) \\
&= \sum_{i=1}^n a_i\cdot 1 + \sum_{i<j} a_i a_j\cdot 1
   = \sum_{i=1}^n a_i + \sum_{i<j} a_i a_j .
\end{aligned}
\]
The last expression is exactly $\psi_n(a)$ by definition~\eqref{eq:simplex-form}.  Hence
\[
Q_W(\phi(a)) = \psi_n(a) \qquad \forall a\in E_n,
\]
so $\phi$ is a morphism of quadratic spaces $(E_n,\psi_n)\to(W,Q_W)$.  Consequently $\phi$ also preserves the polar forms:
\begin{equation}\label{eq:preserve-polar}
b_n(a,b)=B_W(\phi(a),\phi(b)) \qquad \forall a,b\in E_n.
\end{equation}

If $a\in\ker\phi$, then $\phi(a)=0$ and \eqref{eq:preserve-polar} yields $b_n(a,b)=0$ for every $b\in E_n$; thus $\ker\phi\subseteq\rad(b_n)$.  
By Lemma~\ref{lem:binary-simplex}, $\rad(b_n)=\{0\}$ when $n$ is even, and $\rad(b_n)=\langle\mathbf{1}\rangle$ when $n$ is odd.
Therefore the induced map
\[
\bar\phi\colon E_n/\ker\phi\longrightarrow W,\qquad
\bar\phi(a+\ker\phi)=\phi(a)
\]
is well defined and injective. Comparing dimensions,
\[
\dim (E_n/\ker\phi) \le \dim W = 2m .
\]
If $n$ is even, then $\rad(b_n)=0$ forces $\ker\phi=0$. Hence $n = \dim E_n \le \dim W = 2m$. If $n$ is odd, then $\dim\rad(b_n)=1$ gives $\dim\ker\phi\le 1$. Consequently, 
\[
n \leq \dim\ker\phi + \dim W \le 1 + 2m .
\]
If equality $n=2m+1$ holds, then we must have $\dim\ker\phi=1$, which forces $\ker\phi=\langle\mathbf{1}\rangle$ and $\phi(\mathbf{1})=0$. In all cases the clique size satisfies $|C|=n+1\le 2m+2$, and the exact bounds are sharpened in the case analysis that follows.

\medskip
\noindent\textbf{Case $n=2m+1$.}
Then $\ker\phi=\rad(b_{2m+1})=\langle\mathbf{1}\rangle$. Thus $\psi_{2m+1}(\mathbf{1})=Q_W(\phi(\mathbf{1}))=0$. By Lemma~\ref{lem:binary-simplex}(i), this happens if and only if $m$ is odd. Lemma~\ref{lem:binary-simplex}(iii) then shows that the quotient $\overline{E}=E_{2m+1}/\langle\mathbf{1}\rangle$ with the descended form has type $\eta_m$. Since $\bar\phi$ embeds this nondegenerate $2m$-dimensional space into $W$, we must have $\eps=\eta_m$, and $\bar\phi$ is an isometry. The images $\bar\phi(\bar e_i)=\phi(e_i)=x_i$ ($i=1,\dots,2m+1$) together with $0$ form a clique of size $2m+2$. This proves $\omega(G)\ge2m+2$ when $\eps=\eta_m$ and $m$ is odd, and the upper bound forces equality, giving the first line of~\eqref{eq:binary-clique-formula}. 

\medskip
\noindent\textbf{Case $n=2m$.}
Here $\ker\phi=\rad(b_{2m})=0$, so $\phi$ is injective and thus an isometry between $(E_{2m},\psi_{2m})$ and $(W,Q_W)$. Lemma~\ref{lem:binary-simplex}(ii) forces $\eps=\eta_m$. The standard basis of $E_{2m}$ provides a clique of size $2m+1$. Hence $\omega(G)\ge2m+1$ when $\eps=\eta_m$. When $m$ is even, the previous case already shows $n=2m+1$ is impossible because that requires $m$ odd. Hence $\omega(G)=2m+1$, proving the second line of~\eqref{eq:binary-clique-formula}.

\medskip
\noindent\textbf{Case $\eps=-\eta_m$.}
From the two previous cases we must have $n\le 2m-1$; otherwise $\eps$ would be $\eta_m$. We now construct a clique of size $2m$, i.e.\ $n=2m-1$, thereby showing $\omega(G)=2m$.

\textit{Subcase $m$ even.}
Lemma~\ref{lem:binary-simplex}(iv) yields $\psi_{2m-1}(\mathbf{1})=0$ and $\rad(b_{2m-1})=\langle\mathbf{1}\rangle$. Hence $\psi_{2m-1}$ descends to a nondegenerate quadratic form $\overline{\psi}$ on $\overline{E}=E_{2m-1}/\langle\mathbf{1}\rangle$, of dimension $2m-2$ and type $\eta_m$. Let $\bar e_i=e_i+\langle\mathbf{1}\rangle$ ($i=1,\dots,2m-1$). By the definition of $\overline{\psi}$,
\[
\overline{\psi}(\bar e_i) = \psi_{2m-1}(e_i) = 1,
\qquad
\bar b(\bar e_i,\bar e_j) = b_{2m-1}(e_i,e_j) = 1 \;\; (i\neq j),
\]
where $\bar b$ is the polar form of $\overline{\psi}$. Since $\dim W-\dim\overline{E}=2$ and the types are opposite, Witt's extension theorem guarantees an isometric embedding $\iota\colon\overline{E}\to W$ whose orthogonal complement is an anisotropic plane (minus type). Define $x_i = \iota(\bar e_i) \in W$ for $i=1,\dots,2m-1$. Because $\iota$ preserves the quadratic form and its polar form,
\[
Q_W(x_i) = \overline{\psi}(\bar e_i) = 1,
\qquad
B_W(x_i,x_j) = \bar b(\bar e_i,\bar e_j) = 1 \;\; (i\neq j).
\]
The vectors $x_1,\dots,x_{2m-1}$ are pairwise distinct (if $x_i=x_j$ then $\bar e_i=\bar e_j$, which would imply $e_i-e_j\in\langle\mathbf{1}\rangle$, impossible for $i\neq j$ when $2m-1\ge 3$). Hence $\{x_1,x_2,\cdots,x_{2m-1}\}$ satisfy \eqref{eq:binary-clique-gram} and $\sum x_i=\sum\iota(\bar e_i)=\iota(\overline{\mathbf{1}})= 0$. Together with $0$ they form a clique of size $2m$.

\textit{Subcase $m$ odd.}
Now $\psi_{2m-1}(\mathbf{1})=1\neq0$, so the form does not descend to the quotient.  We embed it directly into a nondegenerate space of type $-\eta_m$. Let $z$ be a new vector and $V=E_{2m-1}\oplus\langle z\rangle$. Fix the complement $\{e_1,\dots,e_{2m-2}\}$ of $\langle\mathbf{1}\rangle$ in $E_{2m-1}$ and write every $x\in E_{2m-1}$ uniquely as $x = y + \alpha\mathbf{1}$ with $y$ in that complement. Define a quadratic form $\Psi$ on $V$ by
\[
\Psi(x+\beta z) = \psi_{2m-1}(x) + \beta B_\Psi(x,z),\qquad \Psi(z)=0,
\]
where the associated bilinear form $B_\Psi$ extends $b_{2m-1}$ via
\[
B_\Psi(z,\mathbf{1})=1,\quad B_\Psi(z,e_i)=0\;(i=1,\dots,2m-2),\quad B_\Psi(z,z)=0.
\]
Hence $B_\Psi(x,z)=\alpha$ for $x=y+\alpha\mathbf{1}$. If $v=x+\beta z$ lies in $\operatorname{rad}(B_\Psi)$, then $B_\Psi(v,w)=0$ for all $w$.
Taking $w=z$ gives $B_\Psi(x,z)=\alpha=0$, so $x=y$. Taking $w=\mathbf{1}$ gives $b_{2m-1}(x,\mathbf{1})+\beta=0$; because $\mathbf{1}\in\operatorname{rad}(b_{2m-1})$ we have $b_{2m-1}(x,\mathbf{1})=0$, hence $\beta=0$. Now $b_{2m-1}(x,y')=0$ for all $y'\in E_{2m-1}$, forcing $x\in\operatorname{rad}(b_{2m-1})=\langle\mathbf{1}\rangle$. Combined with $\alpha=0$ we obtain $x=0$, so $v=0$. Thus $\Psi$ is nondegenerate. The Gauss sum of $(V,\Psi)$ splits as
\[
S = \sum_{x\in E_{2m-1}} (-1)^{\psi_{2m-1}(x)} + \sum_{x\in E_{2m-1}} (-1)^{\Psi(x+z)}.
\]
Using the decomposition $x=y+\alpha\mathbf{1}$ and $\psi_{2m-1}(\mathbf{1})=1$, $b_{2m-1}(y,\mathbf{1})=0$, we obtain
\[
\psi_{2m-1}(x) = \psi_{2m-1}(y)+\alpha
\]
and
\[
\Psi(x+z) = \psi_{2m-1}(x) + B_\Psi(x,z) = \psi_{2m-1}(y)+\alpha+\alpha = \psi_{2m-1}(y).
\]
Therefore
\[
S = \sum_{y,\alpha}(-1)^{\psi_{2m-1}(y)+\alpha} + \sum_{y,\alpha}(-1)^{\psi_{2m-1}(y)}
   = 0 + 2\sum_{y\in E_{2m-2}} (-1)^{\psi_{2m-1}(y)} .
\]
The restriction of $\psi_{2m-1}$ to the $(2m-2)$-dimensional subspace spanned by $e_1,\dots,e_{2m-2}$ is nondegenerate and, by Lemma~\ref{lem:binary-simplex}(ii), has type $\eta_{m-1}$. Hence $\sum_y (-1)^{\psi_{2m-1}(y)} = \eta_{m-1} 2^{m-1}$ and
\[
S = 2 \eta_{m-1} 2^{m-1} = \eta_{m-1} 2^{m}.
\]
The type is $2^{-m}S = \eta_{m-1}$. For odd $m$, $\eta_{m-1} = -\eta_m = \eps$. Thus $(V,\Psi)$ is a nondegenerate quadratic space of dimension $2m$ and type $\eps$, isometric to $(W,Q_W)$. Under such an isometry, the images of $e_1,\dots,e_{2m-1}$ give vectors $x_1,\dots,x_{2m-1}\in W$ satisfying~\eqref{eq:binary-clique-gram} and linearly independent. Together with $0$ they form a clique of size $2m$.

In both subcases $\omega(G)\ge2m$, and since $n\le2m-1$, we obtain $\omega(G)=2m$, the third line of~\eqref{eq:binary-clique-formula}.

\medskip
\noindent\textbf{Orbit property.}
Let $C_1,C_2$ be two maximum cliques, $|C_1|=|C_2|=\omega(G)$. After translations we assume $0\in C_1\cap C_2$, and write the non‑zero vectors as $\{x_1,\dots,x_n\}$ and $\{y_1,\dots,y_n\}$, where $n=\omega(G)-1$ is $2m+1$, $2m$ or $2m-1$ according to the three cases.

From the Gram relations~\eqref{eq:binary-clique-gram} and the definition of $\phi$, both families satisfy the same linear relations as the standard basis of the corresponding model in Lemma~\ref{lem:binary-simplex}:
\begin{itemize}
\item $n=2m+1$: $\sum x_i=0$, $\sum y_i=0$ (and $\psi_{2m+1}(\mathbf{1})=0$).
\item $n=2m$: no linear relation, the vectors are bases of $W$.
\item $n=2m-1$, $\eps=-\eta_m$, $m$ even: $\sum x_i=0$, $\sum y_i=0$, and $Q_W(\sum x_i)=0$.
\item $n=2m-1$, $\eps=-\eta_m$, $m$ odd: the vectors are linearly independent, and 
$$
Q_W(\sum x_i)=\psi_{2m-1}(\mathbf{1})=1.
$$
\end{itemize}
In each situation the natural linear bijection $f: \langle x_1,\dots,x_n\rangle \to \langle y_1,\dots,y_n\rangle$ defined by $f(x_i)=y_i$ is well defined and an isometry of the quadratic subspaces $U_1$ and $U_2$.

If $n=2m+1$ or $n=2m$, then $U_1=U_2=W$ and $f\in\Orth(W,Q_W)$.
If $n=2m-1$, then $U_1$ has codimension $1$ or $2$. In the even subcase $U_1$ is nondegenerate of dimension $2m-2$ and its orthogonal complement is an anisotropic plane; the same holds for $U_2$. In the odd subcase $U_1$ has a one‑dimensional radical generated by a singular vector (quadratic value $1$), and its orthogonal complement is a one‑dimensional non‑singular subspace; again identical for $U_2$. In both subcases, we can extend $f$ by an isometry of the orthogonal complements (any two anisotropic planes over $\mathbb{F}_2$ are isometric; for the odd subcase simply map the complement generators), obtaining an element $\tilde f\in\Orth(W,Q_W)$ that maps $C_1$ to $C_2$.

Finally, composing with the initial translations yields an element of $W\rtimes\Orth(W,Q_W)$ sending $C_1$ onto $C_2$. Hence all maximum cliques lie in a single orbit.
\end{proof}

\begin{remark}
For $m=1,2,3,4$, the ordered pairs of plus- and minus-type clique numbers
are respectively
\[
 (2,4),\qquad(4,5),\qquad(8,6),\qquad(9,8).
\]
The plus-type value reaches the ovoid bound $2^m$ exactly for
$m\le3$, in agreement with Corollary~\ref{cor:plus-clique-threshold}.
\end{remark}

\section{Cores and endomorphisms}
\label{sec:cores}

An endomorphism of a graph is a homomorphism from the graph to itself.  A
graph is a \emph{core} if all its endomorphisms are automorphisms; the core
of an arbitrary graph is its smallest retract, uniquely determined up to
isomorphism.  Roberson proved that every primitive strongly regular graph is
a pseudocore: every endomorphism is either an automorphism or a coloring
\cite{Roberson}.  More precisely, if $k$ and $s$ are its valency and least
eigenvalue, then it has a proper endomorphism if and only if
\begin{equation}\label{eq:roberson-criterion}
 \omega=1-\frac{k}{s}=\chi;
\end{equation}
in that case its core is the complete graph of that order.

The following theorem classifies the core of the graph $\Gamma_1(p)$ in even characteristic, according to its type and residual dimension.
\begin{theorem}\label{thm:core}
Let $q=2^h$ and let $G=\Gamma_1(p)$ have residual dimension $2m$ and type
$\eps$.  Then
\[
 \operatorname{core}(G)\cong
 \begin{cases}
  K_{q^m},&\eps=+1\text{ and }1\le m\le3,\\
  G,&\eps=+1\text{ and }m\ge4,\\
  G,&\eps=-1.
 \end{cases}
\]
In particular, every endomorphism in either of the last two lines is an
automorphism.  For $m\ge2$ it consequently has the affine-semilinear form
$x\mapsto Ax+b$ described in Theorem~\ref{thm:local-aut}.
\end{theorem}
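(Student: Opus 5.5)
The plan is to split according to whether Roberson's pseudocore theorem applies, namely whether $G$ is a primitive strongly regular graph. For $m\ge2$, Proposition~\ref{prop:local-parameters} shows that $G$ is strongly regular. It is primitive because both $G$ and its complement $VO(W,Q_W)$ are connected with $0<\mu<k$. Indeed, $\mu=(q-1)q^{m-1}((q-1)q^{m-1}-\eps)>0$, and $\mu_0=q^{m-1}(q^{m-1}+\eps)>0$ for the complement. We then apply the criterion \eqref{eq:roberson-criterion}: $G$ has a proper endomorphism if and only if $\omega(G)=1-k/s=\chi(G)$, and in that case $\operatorname{core}(G)\cong K_{\chi(G)}$. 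Otherwise $G$ is a core.

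For $\eps=-1$ and $m\ge2$, Lemma~\ref{lem:clique-bounds} gives $\omega(G)\le q^{m+1}-q^m+q<q^{m+1}=\chi(G)$, where the value of $\chi(G)$ comes from Theorem~\ref{thm:independent-coloring}. So the criterion fails and $G$ is a core.

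For $\eps=+1$ and $m\ge2$, we have $1-k/s=q^m$ by Lemma~\ref{lem:clique-bounds} and $\chi(G)=q^m$ by Theorem~\ref{thm:independent-coloring}. Hence the criterion holds exactly when $\omega(G)=q^m$. Corollary~\ref{cor:plus-clique-threshold}, which rests on Theorem~\ref{thm:clique-ovoid} and the existence or nonexistence of ovoids of $Q^+(2m+1,q)$, says this happens precisely for $m\le3$. So for $m=2,3$ the core is $K_{q^m}$. Directly, a $q^m$-clique together with the coloring by cosets of a maximal totally singular subspace gives a retraction onto the clique. For $m\ge4$, $G$ is a core.

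The residual case $m=1$ is handled by hand. For $\eps=-1$, $G=K_{q^2}$ is a core, consistent with the third line. For $\eps=+1$, $G$ is the complement of the $q\times q$ rook's graph $H(2,q)$. The diagonal $\{(s,s)\}$ is a $q$-clique, and the rows give a proper $q$-coloring, so $\omega=\chi=q$ and the core is $K_q=K_{q^m}$.

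The final assertions follow immediately. In the core cases every endomorphism is an automorphism. For $m\ge2$, Theorem~\ref{thm:local-aut} then identifies each such endomorphism as a map $x\mapsto Ax+b$ with $A\in\GammaO(W,Q_W)$.

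The step needing the most care is the $m=1$, $\eps=-1$ case. This is the only $m=1$ instance of the last line; the graph is complete and therefore trivially a core. The plus case $m=1$ needs only the explicit clique and coloring above.

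The main substantive input is the equivalence between Delsarte cliques and ovoids, together with the Blokhuis--Moorhouse nonexistence result for $m\ge4$. Both are already packaged in Corollary~\ref{cor:plus-clique-threshold}. What remains is to verify primitivity carefully, so that Roberson's theorem applies, including for $q=2$. There, for example, $\lambda=\mu$ in plus type, but $0<\mu<k$ still holds.
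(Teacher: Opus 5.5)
Your proof is correct and follows the paper's argument. For $m\ge2$ it applies Roberson's pseudocore criterion, taking $\chi(G)$ from the coset colouring. In minus type the Delsarte bound $q^{m+1}-q^m+q<q^{m+1}$ rules out a complete core, and in plus type the ovoid threshold of Corollary~\ref{cor:plus-clique-threshold} decides the case. The two residual planes are handled directly, as in the paper. Your explicit primitivity check via $\mu>0$ and $\mu_0>0$ simply spells out what the paper asserts from the parameters.
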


\begin{proof}
First let $m\ge2$.  The parameters in
Proposition~\ref{prop:local-parameters}, together with those of the
complement in \eqref{eq:vo-v}--\eqref{eq:vo-mu}, show that $G$ is a
primitive strongly regular graph, so \eqref{eq:roberson-criterion} applies.

In plus type, Lemma~\ref{lem:clique-bounds} gives
$1-k/s=q^m$, and Theorem~\ref{thm:independent-coloring} gives
$\chi(G)=q^m$.  Corollary~\ref{cor:plus-clique-threshold} says that the
remaining equality $\omega(G)=q^m$ holds exactly when $m\le3$. Hence Roberson's criterion yields a proper endomorphism and $\operatorname{core}(G)\cong K_{q^m}$ when $2\leq m\leq 3$. For $m\ge 4$, Corollary~\ref{cor:plus-clique-threshold} gives $\omega(G) < 1-k/s$, so the equality $\omega = 1-k/s = \chi$ fails. Consequently $G$ has no proper endomorphism; it is a core. 

In minus type,
\[
 1-\frac{k}{s}=q^{m+1}-q^m+q<q^{m+1}=\chi(G),
\]
so $G$ is a core.  If $m=1$ and $\eps=-1$, then $G=K_{q^2}$. A complete graph is trivially a core, hence $\operatorname{core}(G)\cong G = K_{q^2}$. If $m=1$ and $\eps=+1$, then
$G=\overline{H(2,q)}$ has both clique and chromatic number $q$, and hence
has core $K_q$.  This completes all cases.
\end{proof}

The proper endomorphisms in the complete-core cases have a concrete
geometric shape.

\begin{corollary}
\label{cor:endomorphism-fibres}
Assume $\eps=+1$ and $1\le m\le3$.
Every nonautomorphic endomorphism of $G$ has image a clique of size $q^m$;
each of its fibres is a maximum independent set and hence an affine coset
of a maximal totally singular subspace.

Conversely, let $C$ be any clique of size $q^m$ and let $U\le W$ be any
maximal totally singular subspace.  Every coset of $U$ meets $C$ in exactly
one point, and
\begin{equation}\label{eq:explicit-retraction}
 \rho_{C,U}(x)=\text{the unique }c\in C\text{ such that }x-c\in U
\end{equation}
is a retraction of $G$ onto $C\cong K_{q^m}$.
\end{corollary}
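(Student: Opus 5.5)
We will treat the two directions separately, beginning with the converse because it supplies the structure needed for the first part. Fix a clique $C$ with $|C|=q^m$ and a maximal totally singular subspace $U\le W$; in plus type $\dim U=m$, so there are exactly $q^m$ cosets of $U$. Each coset $U+b$ is an independent set of $G$, because differences lie in $U$ and are singular. Hence it meets the clique $C$ in at most one point. We then count: $C$ has $q^m$ points distributed among $q^m$ cosets with at most one point per coset, so every coset contains exactly one point of $C$. This shows that $\rho_{C,U}$ is well defined, and $\rho_{C,U}$ fixes $C$ pointwise.

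\textbf{Homomorphism property of $\rho_{C,U}$.} Take $x\sim y$, so that $Q_W(x-y)\ne0$. Then $x$ and $y$ lie in different cosets of $U$, since otherwise $x-y\in U$ would be singular. Consequently $\rho(x)$ and $\rho(y)$ are distinct points of the clique $C$, and therefore adjacent. Hence $\rho_{C,U}$ is a retraction onto $C\cong K_{q^m}$. Existence of such $C$ for $1\le m\le3$ comes from Corollary~\ref{cor:plus-clique-threshold}, or from the grid picture when $m=1$.

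\textbf{Forward direction.} Let $\varphi$ be an endomorphism that is not an automorphism. For $m\ge2$, $G$ is primitive strongly regular, so Roberson's pseudocore theorem shows that $\varphi$ is a coloring, i.e.\ its image is a clique. For $m=1$ we argue directly: a non-injective endomorphism of the finite graph has image a proper subgraph, and Theorem~\ref{thm:core} identifies the core as $K_q$; we would check that a non-bijective endomorphism has image a clique, using that $\overline{H(2,q)}$ is itself a pseudocore, which is elementary. Granting that the image is a clique, its size is at most $\omega(G)=q^m$. The fibres are independent sets, since adjacent vertices have distinct adjacent images, so each fibre has size at most $\alpha(G)=q^m$ by Theorem~\ref{thm:independent-coloring}. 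As $q^{2m}$ vertices are covered by at most $q^m$ fibres of size at most $q^m$, both bounds are attained. The image is then a clique of size $q^m$, and every fibre is a maximum independent set, hence an affine coset of a maximal totally singular subspace by the equality case of Theorem~\ref{thm:independent-coloring}.

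\textbf{Main obstacle.} The hard step is extracting ``image is a clique'' from Roberson's theorem in the precise form needed. The pseudocore statement says every endomorphism is an automorphism or has image a complete graph (a coloring with $\chi$ colours), which is exactly what we use. The $m=1$ case needs the small separate check above.
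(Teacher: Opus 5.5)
Your proposal is correct and follows the paper's proof essentially step for step: Roberson's pseudocore theorem for $m\ge2$, the fibre-counting argument, and the same transversal argument showing that $\rho_{C,U}$ is a retraction. The counting differs only trivially: you bound the image by $\omega$ and each fibre by $\alpha$, whereas the paper uses $\chi$ as a lower bound on the image.

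The $m=1$ pseudocore claim, which you leave as ``elementary'', is stated equally briefly in the paper, and it does hold. Here $G\cong K_q\times K_q$, so each coordinate of an endomorphism is a $q$-colouring. The colour classes of a $q$-colouring must all be full rows or all be full columns, since a row and a column meet. Hence an endomorphism is either an automorphism or a map $x\mapsto(\pi_1(x_j),\pi_2(x_j))$, whose image is a clique of size $q$.
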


\begin{proof}
For $2\leq m\leq 3$, the pseudocore theorem~\cite{Roberson} makes every proper endomorphism $\varphi$ a
coloring.  Its image $\operatorname{im}(\varphi)$ has at least $\chi(G)=q^m$ vertices and is a clique of
size at most $\omega(G)=q^m$, so equality holds, i.e., $|\operatorname{im}(\varphi)|=q^m$.  The fibres $\varphi^{-1}(c)$ ($c\in\operatorname{im}(\varphi)$) form a partition of the $q^{2m}$ vertices of $G$ into $q^m$ parts. Their average size is $q^{2m}/q^m = q^m$ and each fibre is an independent set. By Theorem~\ref{thm:independent-coloring}, every independent set of $G$ is contained in a coset of a totally singular subspace, and the maximum size of an independent set is $\alpha(G)=q^m$.  Consequently, each fibre must have exactly $q^m$ elements and be a maximum independent set, hence an affine coset of a maximal totally singular subspace.  

For $m=1$ ($\eps=+1$), $G = \overline{H(2,q)}$ is the complement of the $q\times q$ grid.  Its proper endomorphisms are exactly the projections onto a row or column (followed by a permutation within that row/column).  The image is a clique of size $q$, each fibre is a row or a column, which are affine cosets of the two families of maximal totally singular subspaces (the horizontal and vertical lines).  Thus the statement also holds for $m=1$.

Conversely, let $C\subseteq W$ be any clique of size $q^m$ and let $U\le W$ be any maximal totally singular subspace (hence $\dim U = m$).  Consider the set of cosets $W/U = \{U+x \mid x\in W\}$, which has size $q^{2m}/q^m = q^m$.

If two distinct points $c_1,c_2\in C$ belonged to the same coset of $U$, then $c_1-c_2\in U$, so $Q_W(c_1-c_2)=0$, contradicting the adjacency $c_1\sim c_2$ in $G$ (which requires $Q_W(c_1-c_2)\neq0$).  Hence every coset of $U$ contains at most one point of $C$.  Since there are exactly $q^m$ cosets and $|C|=q^m$, every coset meets $C$ in \emph{exactly} one point.  Thus $C$ is a transversal of the coset partition, and the map \eqref{eq:explicit-retraction} is well defined.  By construction $\rho_{C,U}(c)=c$ for every $c\in C$, so $\rho_{C,U}$ fixes $C$ pointwise.

We verify that $\rho_{C,U}$ is a graph homomorphism.  Let $x\sim y$ be adjacent in $G$, i.e.\ $Q_W(x-y)\neq0$.  Put $c_x = \rho_{C,U}(x)$ and $c_y = \rho_{C,U}(y)$.  Then $x-c_x \in U$ and $y-c_y \in U$, so
\[
(x-y) - (c_x-c_y) = (x-c_x) - (y-c_y) \in U.
\]
If $c_x = c_y$, then $x-y\in U$, implying $Q_W(x-y)=0$, contradicting $x\sim y$.  Hence $c_x \neq c_y$.  Since $C$ is a clique, $Q_W(c_x-c_y)\neq0$, so $c_x \sim c_y$ in $G$.  Therefore $\rho_{C,U}$ preserves adjacency.

We have shown that $\rho_{C,U}$ is a homomorphism from $G$ to its subgraph induced by $C$, which is isomorphic to $K_{q^m}$, and $\rho_{C,U}$ restricts to the identity on $C$.  Thus $\rho_{C,U}$ is a retraction of $G$ onto $C$.
\end{proof}

\section{Binary triple transitivity and the Terwilliger algebra}
\label{sec:terwilliger}

We finish the structural analysis with the binary Terwilliger-algebra
problem.  Let $X$ be a strongly regular graph with relations
$R_0,R_1,R_2$, where $R_0$ is equality, and let $A_0,A_1,A_2$ be their
adjacency matrices.  At a vertex $a$, let $E_{i,a}^*$ be the diagonal
projection onto the $i$th subconstituent.  The basic Terwilliger space, the
Terwilliger algebra, and the centralizer algebra of the point stabilizer are
\begin{align*}
 T_{0,a}&=\operatorname{span}_{\mathbb C}
  \{E_{i,a}^*A_jE_{k,a}^*:0\le i,j,k\le2\},\\
 T_a&=\langle A_0,A_1,A_2,E_{0,a}^*,E_{1,a}^*,E_{2,a}^*\rangle,\\
 \widetilde T_a&=\End_{\Aut(X)_a}(\mathbb C^{V(X)}).
\end{align*}
One always has $T_{0,a}\subseteq T_a\subseteq\widetilde T_a$.  Following
\cite{HermanEtAl}, $X$ is called \emph{triply transitive} when it is vertex
transitive and all three spaces are equal.

Li and Zou proved the affine-polar case by determining the relevant
stabilizer orbits \cite{LiZou}.  The following argument recovers their
result uniformly, including the two four-dimensional types, and makes
complement invariance immediate.

\begin{theorem}
\label{thm:binary-triple-transitive}
Let $m\ge2$ and $\eps\in\{+1,-1\}$.  Both
$VO_{2m}^{\eps}(2)$ and its complement are triply transitive.  In
particular, every binary first subconstituent $\Gamma_1(p)$ satisfies
\[
 T_{0,a}=T_a=\widetilde T_a
 \qquad(a\in V(\Gamma_1(p))).
\]
\end{theorem}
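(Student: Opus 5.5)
We plan to reduce triple transitivity to an orbit count and then settle the orbit count with Witt's extension theorem. Throughout, $X$ is either $VO_{2m}^{\eps}(2)$ or its complement; both have vertex set $W$ and the same automorphism group. By Theorem~\ref{thm:affine-polar-aut} and $q=2$ this group is $W\rtimes\Orth(W,Q_W)$. The translations make $X$ vertex transitive, so we may take $a=0$; then $\Aut(X)_0\supseteq\Orth(W,Q_W)$.

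Since $T_{0,a}\subseteq T_a\subseteq\widetilde T_a$, it suffices to compare dimensions. The matrices $E_{i,a}^*A_jE_{k,a}^*$ have disjoint supports, so $\dim T_{0,a}$ equals the number of nonempty sets
\[
 \{(b,c):(a,b)\in R_i,\ (b,c)\in R_j,\ (a,c)\in R_k\}.
\]
On the other hand, $\dim\widetilde T_a$ is the number of orbits of $\Aut(X)_a$ on $W\times W$. Each orbit lies inside one of these sets. Hence equality of all three algebras follows once we show that every nonempty set of this form is a single $\Aut(X)_0$-orbit. It is enough to prove this for the orbits of $\Orth(W,Q_W)$.

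For $a=0$, the triple $(i,j,k)$ records the following data. It records whether $b=0$, $c=0$ or $b=c$. For nonzero vectors it records $Q_W(b)$, $Q_W(c)$ and $Q_W(b-c)=Q_W(b+c)$. Both $X$ and $\overline X$ encode exactly this information, with the labels $1$ and $2$ swapped, so complement invariance is immediate.

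The degenerate cases reduce to orbits of $\Orth(W,Q_W)$ on single vectors with a prescribed value of $Q_W$. These are transitive by Witt's theorem, since a map $\langle b\rangle\to\langle b'\rangle$ with $Q_W(b)=Q_W(b')$ is an isometry. In the generic case $b,c$ are nonzero and distinct, hence linearly independent over $\F_2$. The values $Q_W(b)$, $Q_W(c)$ and $B_W(b,c)=Q_W(b+c)+Q_W(b)+Q_W(c)$ determine $Q_W$ on $\langle b,c\rangle$. So if $(b',c')$ lies in the same set, then the linear map $b\mapsto b'$, $c\mapsto c'$ is an isometry of $2$-dimensional quadratic subspaces. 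Witt's extension theorem for quadratic forms on the nondegenerate space $W$ extends it to an element of $\Orth(W,Q_W)$, which maps $(b,c)$ to $(b',c')$.

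The step requiring care is the use of Witt's theorem. In characteristic two it must be applied to the quadratic form itself, not merely to the alternating form $B_W$. It must also hold for subspaces whose polar form may be degenerate, for example $\langle b,c\rangle$ with $B_W(b,c)=0$, which may be totally singular or contain nonsingular radical vectors. We will invoke the standard version: any isometry between subspaces of a nondegenerate quadratic space extends. This version holds over finite fields of characteristic two; see Wan's treatment, or Witt's theorem in Taylor's form. Granting it, the count gives $T_{0,a}=T_a=\widetilde T_a$ for $X$ and for $\overline X$. The assertion for $\Gamma_1(p)$ then follows from Proposition~\ref{prop:chart}, which identifies $\Gamma_1(p)$ with $\overline{VO(W,Q_W)}$.
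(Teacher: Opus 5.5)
Your proposal is correct and follows essentially the same route as the paper: both show that each nonempty cell of pairs $(b,c)$ labelled by $(i,j,k)$ at the base vertex $0$ is a single $\Orth(W,Q_W)$-orbit, by recovering $Q_W(b)$, $Q_W(c)$ and $B_W(b,c)$ from the three relations over $\F_2$ and applying Witt's extension theorem, and then compare $T_{0,0}$ with the orbital span $\widetilde T_0$. Your version is slightly more careful in two respects. It treats the linearly dependent cases $b=0$, $c=0$, $b=c$ explicitly, and it observes that only the containment $\Orth(W,Q_W)\le\Aut(X)_0$ is needed, so the exact automorphism group from Theorem~\ref{thm:affine-polar-aut} is not actually required.
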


\begin{proof}
Take $X=VO(W,Q_W)$, it is a strongly regular graph with three relations $R_0,R_1,R_2$, where $R_0$ is equality, $R_1$ is the adjacency $x\sim y \iff Q_W(x-y)=0$ and $x\neq y$, and $R_2$ is the non‑adjacency $x\not\sim y$ and $x\neq y$ (so $Q_W(x-y)\neq0$). Fix the vertex $a=0\in W$. Since the field is $\F_2$, Theorem~\ref{thm:local-aut} gives $\Aut(X)=W\rtimes\Orth(W,Q_W)$. The point stabiliser is
\[
 \Aut(X)_0=\Orth(W,Q_W).
\]
For $i,j,k\in\{0,1,2\}$, consider the relation cell
\begin{equation}\label{eq:relation-cell}
  \mathcal{C}_{ijk}=\{(x,y)\in W\times W : (0,x)\in R_i,\;(x,y)\in R_j,\;(y,0)\in R_k\}.
\end{equation}
We claim that every nonempty $\mathcal{C}_{ijk}$ is a single orbit under the action of $\Orth(W,Q_W)$ on pairs.

Indeed, take $(x,y)$ and $(x',y')$ in the same cell. The three conditions determine:
\begin{itemize}
  \item whether $x=0$ ($i=0$), $Q_W(x)=0$ and $x\neq0$ ($i=1$), or $Q_W(x)\neq0$ ($i=2$);
  \item whether $y=0$ ($k=0$), $Q_W(y)=0$ and $y\neq0$ ($k=1$), or $Q_W(y)\neq0$ ($k=2$);
  \item the relation between $x$ and $y$: $x=y$ ($j=0$), $Q_W(x-y)=0$ and $x\neq y$ ($j=1$), or $Q_W(x-y)\neq0$ ($j=2$).
\end{itemize}
Since the only nonzero element of $\mathbb{F}_2$ is $1$, the values $Q_W(x),Q_W(y),Q_W(x+y)$ are completely determined by these data. Moreover, from $Q_W(x+y)=Q_W(x)+Q_W(y)+B_W(x,y)$ we recover the polar value $B_W(x,y)$, which is also either $0$ or $1$.  

Consequently, the assignment $x\mapsto x'$, $y\mapsto y'$ preserves $Q_W$ and $B_W$ on the sets $\{x,y\}$ and $\{x',y'\}$. It therefore extends linearly to an isometry between the subspaces $\langle x,y\rangle$ and $\langle x',y'\rangle$ of $W$. Since the
polar form of $Q_W$ is nondegenerate, Witt's extension theorem in characteristic two extends it to an element  $g\in\Orth(W,Q_W)$(see~\cite{Wan}). Clearly $g(x)=x'$ and $g(y)=y'$, so $(x,y)$ and $(x',y')$ belong to the same $\Orth(W,Q_W)$-orbit. 

The group $\Orth(W,Q_W)$ acts on $W$ by $x\mapsto g(x)$. This action induces an action on $W\times W$ via $g(x,y)=(g(x),g(y))$. The orbits of this action are exactly the nonempty sets $\mathcal{C}_{ijk}$, because every pair $(x,y)$ belongs to some $\mathcal{C}_{ijk}$, and the action clearly preserves each cell (the relations $(0,x)\in R_i$, $(x,y)\in R_j$, $(y,0)\in R_k$ are invariant under an orthogonal transformation fixing $0$). This proves the claim.

For each orbit $\mathcal{O}$ of $\Orth(W,Q_W)$ on $W\times W$, let $M_{\mathcal{O}}\in\mathbb{C}^{W\times W}$ be its characteristic matrix:
\[
(M_{\mathcal{O}})_{x,y}= \begin{cases} 1 & \text{if }(x,y)\in\mathcal{O},\\ 0 & \text{otherwise.} \end{cases}
\]
These matrices are linearly independent because the orbits partition $W\times W$, so each position $(x,y)$ is non‑zero in exactly one $M_{\mathcal{O}}$. Hence any non‑trivial linear combination $\sum_{\mathcal{O}} c_{\mathcal{O}} M_{\mathcal{O}}=0$ would force all coefficients $c_{\mathcal{O}}$ to be zero.

Let $\mathbb{C}^W$ be the complex vector space of formal linear combinations of the vertices. A natural basis of this space is $\{\mathbf{e}_x \mid x\in W\}$, where $\mathbf{e}_x$ is the vector with $1$ in the $x$‑coordinate and $0$ elsewhere. We \emph{lift} the action of $G$ on $W$ to a linear action on $\mathbb{C}^W$ by defining, for each $g\in G$, a linear map $P(g)\colon\mathbb{C}^W\to\mathbb{C}^W$ whose effect on the basis is dictated by the original permutation:
\[
P(g)\,\mathbf{e}_y = \mathbf{e}_{g(y)} \qquad \forall\,y\in W.
\]
Since a linear map is uniquely determined by its values on a basis, this definition makes $P(g)$ a well‑defined invertible linear operator. Explicitly, for an arbitrary vector $v = \sum_{y\in W} v_y \,\mathbf{e}_y \in \mathbb{C}^W$,
\[
P(g)\,v = \sum_{y\in W} v_y \,\mathbf{e}_{g(y)}.
\]

The matrix of $P(g)$ with respect to the standard basis $\{\mathbf{e}_x\}$ is obtained by reading off the coefficients of $P(g)\,\mathbf{e}_y$:
\[
(P(g))_{x,y}= \begin{cases} 1 & \text{if }g(y)=x,\\ 0 & \text{otherwise.} \end{cases}
\]
Thus $P(g)$ is a permutation matrix, and the assignment $g\mapsto P(g)$ is a group homomorphism from $G$ into the group of $|W|\times|W|$ permutation matrices. This is the permutation representation of $G$ on $\mathbb{C}^W$.

The centraliser algebra of this permutation group is
\[
\widetilde T_0 = \End_{\Orth(W,Q_W)}(\mathbb{C}^W) = \{ M\in\mathbb{C}^{W\times W} \mid M P(g) = P(g) M \text{ for all } g\in\Orth(W,Q_W) \}.
\]

We claim that a matrix $M$ belongs to $\widetilde T_0$ if and only if $M$ is constant on each orbit of $\Orth(W,Q_W)$ on $W\times W$, i.e.\ $M_{x,y}=M_{x',y'}$ whenever $(x,y)$ and $(x',y')$ lie in the same orbit.

Indeed, the condition $M P(g) = P(g) M$ is equivalent to $M = P(g) M P(g)^{-1}$. Conjugating by the permutation matrix $P(g)$ permutes the entries of $M$ according to the action of $g$ on both coordinates:
\[
\begin{aligned}
\bigl(P(g) M P(g)^{-1}\bigr)_{x,y}
&= \bigl(P(g) M P(g^{-1})\bigr)_{x,y} \\
&= \sum_{u,v\in W} P(g)_{x,u} \, M_{u,v} \, P(g^{-1})_{v,y} \\
&= \sum_{u,v\in W} \delta_{x,g(u)} \, M_{u,v} \, \delta_{v,g^{-1}(y)} \\
&= M_{g^{-1}(x),\, g^{-1}(y)} .
\end{aligned}
\]
Thus $M$ commutes with all $P(g)$ precisely when $M_{x,y}=M_{g(x),g(y)}$ for all $g\in\Orth(W,Q_W)$ and all $x,y\in W$, which means $M$ is constant on each orbit of the group acting on $W\times W$ by $g(x,y)=(g(x),g(y))$.

Therefore any $M\in\widetilde T_0$ can be written uniquely as
\[
M = \sum_{\mathcal{O}} c_{\mathcal{O}} M_{\mathcal{O}},
\]
where $\mathcal{O}$ runs over all orbits on $W\times W$ and $c_{\mathcal{O}}\in\mathbb{C}$ is the common value of $M$ on $\mathcal{O}$. Conversely, any such linear combination clearly commutes with every $P(g)$ because each $M_{\mathcal{O}}$ does. Hence
\[
\widetilde T_0 = \operatorname{span}\{ M_{\mathcal{O}} : \mathcal{O} \text{ is an orbit of } \Orth(W,Q_W) \text{ on } W\times W \}.
\]

Now consider the matrices $E_{i,0}^*A_jE_{k,0}^*$ that generate $T_{0,0}$. Recall that the relations $R_0,R_1,R_2$ partition $W\times W$. At the fixed base vertex $0$, the $i$‑th subconstituent ($i=0,1,2$) is the set of vertices $x\in W$ such that $(0,x)\in R_i$.
The diagonal projection $E_{i,0}^*$ onto the $i$‑th subconstituent is defined by its action on basis vectors:
\[
E_{i,0}^*\,\mathbf{e}_x = 
\begin{cases}
\mathbf{e}_x, & \text{if } (0,x)\in R_i,\\[2pt]
0,             & \text{otherwise}.
\end{cases}
\]
Equivalently, as a matrix,
\[
(E_{i,0}^*)_{x,x}= \begin{cases} 1 & \text{if }(0,x)\in R_i,\\ 0 & \text{otherwise,} \end{cases}
\qquad\text{and}\qquad
(E_{i,0}^*)_{x,y}=0\;\;(x\neq y).
\]

The adjacency matrix $A_j$ of relation $R_j$ is defined by
\[
(A_j)_{x,y}= \begin{cases} 1 & \text{if }(x,y)\in R_j,\\ 0 & \text{otherwise.} \end{cases}
\]

Now compute the $(x,y)$-entry of the product $E_{i,0}^*A_jE_{k,0}^*$. Using the standard matrix multiplication formula,
\[
\begin{aligned}
\bigl(E_{i,0}^*A_jE_{k,0}^*\bigr)_{x,y}
&= \sum_{u,v\in W} (E_{i,0}^*)_{x,u}\, (A_j)_{u,v}\, (E_{k,0}^*)_{v,y} \\
&= \sum_{u,v} (E_{i,0}^*)_{x,x}\delta_{x,u}\, (A_j)_{u,v}\, (E_{k,0}^*)_{y,y}\delta_{v,y} \quad\text{(since }E_{i,0}^*\text{ is diagonal)}\\
&= (E_{i,0}^*)_{x,x}\, (A_j)_{x,y}\, (E_{k,0}^*)_{y,y}.
\end{aligned}
\]

Thus
\[
\bigl(E_{i,0}^*A_jE_{k,0}^*\bigr)_{x,y} = 
\begin{cases}
1 & \text{if } (E_{i,0}^*)_{x,x}=1,\; (A_j)_{x,y}=1,\; (E_{k,0}^*)_{y,y}=1,\\
0 & \text{otherwise}.
\end{cases}
\]
By definition, this is equivalent to
\[
(0,x)\in R_i,\quad (x,y)\in R_j,\quad (y,0)\in R_k,
\]
which is precisely the defining condition of the cell $\mathcal{C}_{ijk}$ given in~\eqref{eq:relation-cell}. Therefore
\[
E_{i,0}^*A_jE_{k,0}^* = M_{\mathcal{C}_{ijk}},
\]
the characteristic matrix of the cell $\mathcal{C}_{ijk}$.

Since the nonempty cells $\mathcal{C}_{ijk}$ are the orbits of $\Orth(W,Q_W)$ on $W\times W$, the matrices $E_{i,0}^*A_jE_{k,0}^*$ are precisely the orbital matrices of the point stabiliser. Their span therefore equals the centraliser algebra $\widetilde T_0$:
\[
\widetilde T_0 = \operatorname{span}\{ E_{i,0}^*A_jE_{k,0}^* : i,j,k\in\{0,1,2\} \} = T_{0,0}.
\]
The inclusions $T_{0,0}\subseteq T_0\subseteq\widetilde T_0$ always hold, so we have equality throughout:
\[
T_{0,0}=T_0=\widetilde T_0 .
\]

The whole automorphism group $\Aut(X) = W \rtimes \Orth(W,Q_W)$ acts transitively on the vertex set $W$. For an arbitrary vertex $a\in W$, choose the translation $t_{-a}\colon x\mapsto x-a$, which is an automorphism of $X$ sending $a$ to $0$.  Let $P(t_{-a})$ be the permutation matrix of $t_{-a}$, defined as before.  
Conjugating the generating matrices at $0$ by $P(t_{-a})$ yields the corresponding generators at $a$:
\begin{itemize}
\item For the dual projections, one checks directly that $P(t_{-a})E_{i,0}^*P(t_{-a})^{-1}=E_{i,a}^*$, because $(0,x)\in R_i$ if and only if $(a,\,t_{-a}(x))\in R_i$.
\item The adjacency matrices satisfy $P(t_{-a})A_jP(t_{-a})^{-1}=A_j$, since any translation is a graph automorphism and therefore preserves the relations $R_j$.
\end{itemize}
Consequently, the whole Terwilliger algebra and the basic Terwilliger space are conjugated:
\[
T_{0,a}=P(t_{-a})\,T_{0,0}\,P(t_{-a})^{-1},\qquad
T_a=P(t_{-a})\,T_0\,P(t_{-a})^{-1}.
\]
For the centraliser algebra, the stabiliser at $a$ is $\Aut(X)_a = t_{-a}\,\Aut(X)_0\,t_{-a}^{-1}$, and passing to the permutation matrices gives
\[
\widetilde T_a = \End_{\Aut(X)_a}(\mathbb{C}^W)
               = P(t_{-a})\,\End_{\Aut(X)_0}(\mathbb{C}^W)\,P(t_{-a})^{-1}
               = P(t_{-a})\,\widetilde T_0\,P(t_{-a})^{-1}.
\]
We have already proved $T_{0,0}=T_0=\widetilde T_0$.  Applying the conjugation by $P(t_{-a})$ to this chain of equalities, we obtain
\[
T_{0,a}=T_a=\widetilde T_a \qquad\text{for every } a\in W.
\]
Thus all three spaces coincide at every vertex, which together with vertex transitivity means that $X$ is triply transitive in the sense of Terwilliger.

Finally, the complement graph $\overline{X} = \overline{VO(W,Q_W)}$ interchanges $R_1$ and $R_2$ but leaves $R_0$ unchanged. Its automorphism group is identical to that of $X$, and the three Terwilliger spaces depend only on the permutation group and the partition of vertices into subconstituents, which is also unchanged. Hence the same
conclusion holds for $\overline X=\Gamma_1(p)$.
\end{proof}

\begin{remark}
Theorem~\ref{thm:binary-triple-transitive} should be read as a streamlined
derivation and a local-complement corollary of the Li--Zou theorem, not as a
new resolution of Conjecture~6.17 of \cite{HermanEtAl}.  Its advantage in
the present setting is that the affine chart makes the complete Gram data
visible from the three graph relations, so no separate low-dimensional
computer calculation is needed.
\end{remark}

\section{Remaining problems}
\label{sec:problems}

The results above close the binary Terwilliger problem and the core problem,
but they leave several concrete extremal classification questions.

\begin{enumerate}
\item Determine $\omega(\overline{VO_{2m}^{\eps}(q)})$ for general
$q=2^h>2$ in minus type, and in plus type when $m\ge4$.  The present paper
gives the exact answer for the infinite binary subfamily, while
\eqref{eq:plus-clique-bound}--\eqref{eq:minus-clique-bound} and
Corollary~\ref{cor:plus-clique-threshold} give the general bounds and the
strictness of the plus-type Delsarte bound.

\item Classify the maximum cliques when the plus-type Delsarte bound is
attained.  By Theorem~\ref{thm:clique-ovoid}, for residual rank three this
is precisely the classification of ovoids of $Q^+(7,2^h)$ through a fixed
point.  The low-degree results of \cite{BartoliEtAl} are substantial, but
do not give a classification of arbitrary ovoids.

\item Determine the full endomorphism monoid in the complete-core cases.
Corollary~\ref{cor:endomorphism-fibres} determines every fibre individually
and constructs a large family of retractions.  What remains is to classify
the partitions of $W$ into affine maximal singular flats that can occur as
the fibre partition of an endomorphism, together with the compatible
ovoidal image clique.
\end{enumerate}

\section{Concluding remarks}

The identity $B(v_x,v_y)=Q_W(x-y)$ is the essential observation.  It
simultaneously explains the Cayley structure, the strongly regular
parameters, the translation subgroup, and the appearance of the
semisimilarity group.  Once the neighbourhood is recognized as the
complement of an affine polar graph, the long coordinate reconstruction of
automorphisms is replaced by a standard exact automorphism theorem and two
explicit extension formulas.

The same identity also controls extremal sets.  It turns independent sets
into affine singular flats and Delsarte cliques into ovoids, and in the
binary case reduces every clique to a simplex-type Gram matrix.  These
facts, combined with the general pseudocore theorem, determine all local
cores and the shape of every proper endomorphism in the complete-core
range.  Over $\F_2$, the three graph relations recover the full Gram data
of two vectors, explaining the exceptional triple transitivity without a
case-by-case orbit computation.

The reformulation therefore separates three different phenomena cleanly:
residual dimension at least four is uniform for automorphisms; residual
planes produce the only extra symmetric-group automorphisms; and binary
fields produce the additional Gram rigidity behind the exact clique formula
and Terwilliger equality.

\section*{Acknowledgment}


\section*{Data availability}

No datasets were generated or analysed in this study.

\section*{Declaration of competing interest}

The authors declare that they have no known competing financial interests
or personal relationships that could have influenced the work reported in
this paper.

\end{document}